\newcommand{\Z}{\mathbb{Z}}
\newcommand{\Q}{\mathbb{Q}}
\newcommand{\vect}[1]{\mathbf{#1}}
\newtheorem{definition}{Definition}
\newtheorem{theorem}{Theorem}
\newtheorem{lemma}{Lemma}
\newtheorem{corollary}{Corollary}
\begin{document}
\title[Uniform distribution of multidimensional of LS-sequences]{On the uniform distribution modulo 1 of multidimensional LS-sequences}
\subjclass[2010]{11J71, 11K38, 11D45, 11A67} \keywords{Discrepancy, LS-sequence, uniform distribution, beta-expansion}

\author[C. Aistleitner]{Christoph Aistleitner}
\address{C. Aistleitner \newline
\indent University of New South Wales, \newline
\indent Department of Applied Mathematics,\newline
\indent  School of Mathematics and Statistics,\newline
\indent  Sydney NSW 2052, Australia.}
\email{aistleitner\char'100math.tugraz.at}

\author[M. Hofer]{Markus Hofer}
\address{M. Hofer \newline
\indent Graz University of Technology, \newline
\indent Institute of Mathematics A,\newline
\indent  Steyrergasse 30,\newline
\indent  8010 Graz, Austria.}
\email{markus.hofer\char'100tugraz.at}

\author[V. Ziegler]{Volker Ziegler}
\address{V. Ziegler \newline
\indent Institute of Mathematics A,\newline
\indent Graz University of Technology \newline
\indent Steyrergasse 30, \newline
\indent A-8010 Graz, Austria.}
\email{ziegler\char'100math.tugraz.at}

\thanks{The first author is supported by Project P24302 and an Erwin Schr\"odinger Fellowship of the Austrian Science Fund (FWF)}

\begin{abstract}
Ingrid Carbone introduced the notion of so-called LS-sequences of points, which are obtained by a generalization of Kakutani's interval splitting procedure. Under an appropriate choice of the parameters $L$ and $S$, such sequences have low discrepancy, which means that they are natural candidates for Quasi-Monte Carlo integration. It is tempting to assume that LS-sequences can be combined coordinatewise to obtain a multidimensional low-discrepancy sequence. However, in the present paper we prove that this is not always the case: if the parameters $L_1,S_1$ and $L_2,S_2$ of two one-dimensional low-discrepancy LS-sequences satisfy certain number-theoretic conditions, then their two-dimensional combination is not even dense in $[0,1]^2$.
\keywords{Discrepancy, LS-sequence, uniform distribution, beta-expansion}
\subjclass{11J71, 11K38, 11D45, 11A67} 
\end{abstract}

\maketitle

\section{Introduction and statement of results}

For two points $a,b \in [0,1)^d$  we write $a \leq b$ and $a<b$ if the corresponding inequalities hold in each coordinate; furthermore, we write $[a,b)$ for the set $\{x \in [0,1)^d: ~a \leq x < b\}$, and call such a set a ($d$-dimensional) interval. We denote by $\mathbf{1}_{I}$ the indicator function of the set $I \subseteq [0,1)^d$ and by $\lambda_d$ the $d$-dimensional Lebesgue measure. We will sometimes write $0$ for the $d$-dimensional vector $(0,\dots,0)$.\\

A sequence $(x_n)_{n \in \mathbb{N}}$ of points in $[0,1]^d$ is called \textit{uniformly distributed modulo 1} (u.d. mod 1) if
\begin{equation*}
 \lim_{N \rightarrow \infty} \frac{\sum_{n = 1}^N \mathbf{1}_{[a, b)} (x_n)}{N} = \lambda_d([a,b))
\end{equation*}
for all $d$-dimensional intervals $[a, b ) \subseteq [0,1)^d$. A further characterization of u.d.\ is given by the following well-known result of Weyl~\cite{weyl}: a sequence $(x_n)_{n \in \mathbb{N}}$ of points in $[0,1)^d$ is u.d. mod 1 if and only if for every continuous function $f$ on $[0,1)^d$ the relation
\begin{equation*}
  \lim_{N \rightarrow \infty} \frac{\sum_{n = 1}^N f(x_n)}{N} = \int_{[0,1)^d} f(x) dx
\end{equation*}
holds. Although this theorem shows the possibility of using u.d.\ point sequences for numerical integration --- a method usually called \emph{Quasi-Monte Carlo (QMC) integration} --- it does not give any information on the integration error.\\

The Koksma--Hlawka inequality~\cite{hlawka} states that this integration error can be bounded by the product of the variation of $f$ (in the sense of Hardy and Krause), denoted by $V(f)$, and the so-called \emph{star-discrepancy} $D_N^*$ of the point sequence $(x_n)_{n \in \mathbb{N}}$. More precisely,
\begin{equation*}
 \left| \frac{1}{N} \sum_{n = 1}^N f(x_n) - \int_{[0,1]^d} f(x) dx \right| \leq V(f) D_N^*(x_n),
\end{equation*}
where $D_N^*$ is given by
\begin{equation*}
 D_N^* = D_N^*(x_1, \ldots, x_N) = \sup_{a \in [0,1)^d} \left| \frac{\sum_{n=1}^N \mathbf{1}_{(0,a)} (x_n)}{N} - \lambda_d([0,a))\right|.
\end{equation*}
The Koksma--Hlawka inequality suggest that for QMC integration one should use a sequence of points whose discrepancy is as small as possible. The best known point sequences achieve a discrepancy of order $\mathcal{O}(N^{-1} (\log N)^d)$; such sequences are called \emph{low-discrepancy sequences}. Note that this convergence rate is for all $d \geq 1$ better than that of the probabilistic error of Monte Carlo integration, where a sequence of random points is used instead of a deterministic one. QMC integration can be successfully applied in several different areas of applied mathematics, for example in actuarial or financial mathematics, where frequently high-dimensional integration problems arise (see e.g.~\cite{albrecher,okt}). For more information on discrepancy theory, low-discrepancy sequences and QMC integration see~\cite{dt,kn}.\\

A popular approach to construct $d$-dimensional low-discrepancy sequences is to combine $d$ one-dimensional low-discrepancy sequences; this works for example for the so-called Halton sequence, which is obtained by joining one-dimensional van der Corput sequences coordinatewise. In the present paper we show that this construction principle is not generally applicable for a special class of one-dimensional low-discrepancy sequence, so-called LS-sequences. We prove that the limit distribution of a multidimensional LS-sequences (composed coordinatewise from one-dimensional low-discrepancy LS-sequences) can spectacularly fail to be u.d., if there is a certain number-theoretic connection between the parameters of the one-dimensional sequences. To explain the construction of LS-sequences we need some definitions.

\begin{definition}[Kakutani splitting procedure]
If $\alpha \in (0,1)$ and $\pi = \{[t_{i-1}, t_i): 1 \leq i \leq k\}$ is any partition of $[0,1)$, then $\alpha \pi$ denotes its so-called $\alpha$-refinement, which is obtained by subdividing all intervals of $\pi$ having maximal length into two parts, proportional to $\alpha$ and $1- \alpha$, respectively. The so-called Kakutani's sequence of partitions $(\alpha^n \omega)_{n \in \mathbb{N}}$ is obtained as the successive $\alpha$-refinement of the trivial partition $\omega = \{[0,1)\}$.
\end{definition}

The notion of $\alpha$-refinements can be generalized in a natural way to so-called $\rho$-refinements.

\begin{definition}[$\rho$-refinement]
Let $\rho$ denote a non-trivial finite partition of $[0,1)$. Then the $\rho$-refinement of a partition $\pi$ of $[0,1)$, denoted by $\rho \pi$, is given by subdividing all intervals of maximal length positively homothetically to $\rho$. Note that the $\alpha$-refinement is a special case with $\rho = \{[0, \alpha), [\alpha , 1)\}$.
\end{definition}

By a classical result of Kakutani~\cite{kakutani}, for any $\alpha$ the sequence of partitions  $(\alpha^n \omega)_{n \in \mathbb{N}}$ is uniformly distributed, which means that for every interval $[a,b] \subset [0,1]$,
\begin{equation*}
 \lim_{n \rightarrow \infty} \frac{\sum_{i=1}^{k(n)} \mathbf{1}_{[a,b]}(t_i^n)}{k(n)} = b-a,
\end{equation*}
where $k(n)$ denotes the number of intervals in $\alpha^n \omega=\{[t_{i-1}^n,t_i^n),~1 \leq i \leq k(n)\}$. The same result holds for any sequence of $\rho$-refinements of $\omega$, due to a result of Vol\v{c}i\v{c}~\cite{volcic} (see also~\cite{ah,drmota}). A multidimensional generalization of $\rho$-refinements has been introduced by Carbone and Vol\v{c}i\v{c}~\cite{carbone2}. A special case of a $\rho$-refinement is the so-called \emph{LS-sequence of partitions}. This sequence of partitions has been introduced by Carbone~\cite{carbone}.

\begin{definition}[LS-sequence of partitions]
An LS-sequence of partitions  \linebreak $(\rho^n_{L,S} \omega)_{n \in \mathbb{N}}$ is the successive $\rho$-refinement of the trivial partition $\omega$, where $\rho_{L,S}$ consists of $L+S$ intervals such that the first $L>0$ intervals of $\rho_{L,S}$ have length $\beta$ and the remaining $S \geq 0$ intervals have length $\beta^2$. Note that necessarily $L\beta+S\beta^2 = 1$ holds, and consequently for each pair $(L,S)$ of parameters there exists exactly one corresponding number $\beta$.
\end{definition}

It can easily be seen that for every $n$ the partition $\rho^n_{L,S} \omega$ consists only of intervals having either length $\beta^n$ or $\beta^{n+1}$. This fact makes the analysis of LS-sequences relatively simple, in comparison to the analysis of general $\rho$-refinements. We denote by $t_n$ the total number of intervals of $\rho^n_{L,S} \omega$, and correspondingly let $l_n$ and $s_n$ be the number of long and short intervals after $n$ steps, respectively (more precisely, $l_n$ is the number of intervals of length $\beta^n$, and $s_n$ is the number of intervals of length $\beta^{n+1}$). It is easy to see that these sequences satisfy the following recurrence relations (see~\cite{carbone}):
\begin{align*}
 t_n &= L t_{n - 1} + S t_{n - 2},\\
 l_n &= L l_{n - 1} + S l_{n - 2},\\
 s_n &= L s_{n - 1} + S s_{n - 2},
\end{align*}
for $n \geq 2$, where $t_1 = L + S, t_0 = 1, l_1 = L, l_0, = 1, s_1 = S$ and $s_0 = 0$. Solving these binary recurrences we obtain explicit formulas for $t_n,l_n$ and $s_n$:
\begin{align}\label{Rec:tk}
t_n&=\tau_0 \beta^{-n}+\tau_1(-S\beta)^n,& \tau_0&=\frac{L+2S+\sqrt{L^2+4S}}{2\sqrt{L^2+4S}},\\\nonumber
& & \tau_1&=\frac{-L-2S+\sqrt{L^2+4S}}{2\sqrt{L^2+4S}},\\\label{Rec:lk}
l_n&=\lambda_0 \beta^{-n}+\lambda_1(-S\beta)^n,& \lambda_0&=\frac{L+\sqrt{L^2+4S}}{2\sqrt{L^2+4S}},\\\nonumber
& & \lambda_1&=\frac{-L+\sqrt{L^2+4S}}{2\sqrt{L^2+4S}},\\\label{Rec:sk}
s_n&=\sigma_0 \beta^{-n}+\sigma_1(-S\beta)^n,& \sigma_0&=\frac{2S+\sqrt{L^2+4S}}{2\sqrt{L^2+4S}},\\\nonumber
& & \sigma_1&=\frac{-2S+\sqrt{L^2+4S}}{2\sqrt{L^2+4S}}.
\end{align}

We can generate a sequence of points from a sequence of partitions by ordering the left endpoints of the intervals in the partition. The following rule by Carbone~\cite{carbone} defines the so-called LS-sequence of points.

\begin{definition}[LS-sequence of points]\label{Def:LSpoints}
Given an LS-sequence of partitions \linebreak $(\rho^n_{L,S} \omega)_{n \in \mathbb{N}}$ we define the corresponding LS-sequence of points $(\xi_{L,S}^n)_{n \in \mathbb{N}}$ as follows: the first $t_1$ points are the left endpoints of the partition $\rho_{L,S} \omega$ ordered by magnitude. We denote this ordered set of points by $\Lambda^1_{L,S}$.

For $n > 1$ we define $\Lambda^{n + 1}_{L,S} = \{ \xi_{L,S}^0, \ldots, \xi_{L,S}^{t_{n + 1}-1} \}$ inductively as the ordered set of the left endpoints of the intervals of $\rho^n_{L,S} \omega$ in the following way:
\begin{align*}
\Lambda^{n + 1}_{L,S} = &\{ \xi_{L,S}^0, \ldots, \xi_{L,S}^{t_{n}-1},\\ 
&\psi_{1,0}^{n + 1}\left(\xi_{L,S}^0 \right), \ldots, \psi_{1,0}^{n + 1}\left(\xi_{L,S}^{l_n-1} \right), \ldots, \psi_{L,0}^{n + 1}\left(\xi_{L,S}^0 \right), \ldots, \psi_{L,0}^{n + 1}\left(\xi_{L,S}^{l_n-1} \right),\\
&\psi_{L,1}^{n + 1}\left(\xi_{L,S}^0 \right), \ldots, \psi_{L,1}^{n + 1}\left(\xi_{L,S}^{l_n-1} \right), \ldots, \psi_{L,S}^{n + 1}\left(\xi_{L,S}^0 \right), \ldots, \psi_{L,S}^{n + 1}\left(\xi_{L,S}^{l_n-1} \right) \},
\end{align*}
where
\begin{equation*}
 \psi^n_{i,j}(x) = x + i \beta^n + j \beta^{n + 1}.
\end{equation*}
\end{definition}
For more details on the definition of LS-sequences of points, and on the properties of such sequences, see~\cite{Carbonearxiv,carbone}.

Next, we recall the definition of the well-known van der Corput sequence in base $b \geq 2$, $b \in \mathbb{N}$. For every $n \in \mathbb{N}_0 $, the unique digit expansion of $n$ in base $b$ is given by
\begin{equation*}
 n = \sum_{i \geq 0} n_i b^i,
\end{equation*}
where $n_i \in \{0,1, \ldots, b-1\}, i \geq 0$.
For $n \in \mathbb{N}_0$ we define the \emph{radical-inverse function} (or \emph{Monna map}) $\phi_b(n) \colon \mathbb{N}_0 \rightarrow [0,1)$ by
\begin{equation}\label{monna}
\phi_b(n) = \phi_b \left( \sum_{i \geq 0} n_i b^i \right) := \sum_{i \geq 0} n_i b^{-i-1}.
\end{equation}
We call $x$ a $b$-adic rational if $x=a b^{-c}$, where $a$ and $c$ are positive integers and $0 \leq a < b^c$.
Note that $\phi_b(n)$ maps $\mathbb{N}$ onto the $b$-adic rationals in $[0,1)$, and therefore the image of $\mathbb{N}$ under $\phi_b(n)$ is dense in $[0,1)$.

\begin{definition}
The van der Corput sequence in base $b$ is defined as $(\phi_b(n))_{n \in \mathbb{N}}$.
\end{definition}

Note that the definition of the van der Corput sequence in base $b \geq 2$ coincides with the definition of the LS-sequence of points with parameters $L=b$ and $S=0$. Thus LS-sequences can be seen as a generalization of the van der Corput sequence. A remarkable property of van der Corput sequences is, that several van der Corput sequences in pairwise coprime bases can be combined coordinatewise to a multidimensional sequence, the so-called Halton sequence, which is a low-discrepancy sequence. As mentioned above this means that the discrepancy of a Halton sequence is of asymptotic order $\mathcal{O}(N^{-1} (\log N)^d)$, where $N$ is the number of points and $d$ denotes the dimension, which together with the Koksma-Hlawka inequality makes it a perfect candidate for Quasi-Monte Carlo integration (for details on the properties of van der Corput and Halton sequences, see~\cite{dt,kn}).\\

If we assume $S \geq 1$, then by a result of Carbone~\cite{carbone}, one-dimensional LS-sequences are low-discrepancy sequences if and only if $L > S - 1$. Thus it is tempting to assume that several LS-sequences can be combined coordinatewise in order to obtain multidimensional low-discrepancy sequences. If this was be the case, then this method would produce a new parametric class of multidimensional low-discrepancy sequences. However, even in the case of the combination of van der Corput sequences (which are a special case of LS-sequences, as mentioned before), the bases $b_1, \dots, b_d$ cannot be chosen arbitrarily, but have to satisfy a certain number-theoretic condition (they have to be coprime). A similar restriction can be expected in the case of combining LS-sequences.\\

In a talk in Graz in June 2012, Maria Rita Iac\`{o} presented several numerical examples of the asymptotic distribution of two-dimensional LS-sequences. In some cases they showed ``random'' behavior, while in others (for example when combining the sequence with parameters (1,1) and the sequence with parameters (4,1)) the distribution seemed to be rather erratic. Obviously the reason for this behavior is that there is a multiplicative relation between the solutions of the equations $x+x^2=1$ and $4x+x^2=1$, which define the lengths of the intervals for the LS-sequences with parameters (1,1) and (4,1), respectively. The purpose of the present paper is to prove that in fact the two-dimensional LS-sequences is \emph{not} uniformly distributed (and not even dense) in $[0,1]^2$ if such a multiplicative relation exists. Furthermore, in a second theorem we show that if the parameters of two one-dimensional LS-sequences have a greatest common divisor ($\gcd$) which is greater than 1, then the resulting two-dimensional LS-sequence is also not dense in $[0,1]^2$. This second result generalizes the requirement of having coprime bases of the van der Corput sequences, in order to obtain a low-discrepancy Halton sequence by joining them coordinatewise.\\

The formal definition of a multidimensional LS-sequence can be given as follows.

\begin{definition} \label{def6}
Let $\mathcal B=((L_1,S_1),\ldots,(L_d,S_d))$ be an ordered $d$-tuple of pairs, $(L_i,S_i)$ such that $L_i>0, ~S_i\geq 0$ and $L_i + S_i \geq 2$ for all $i$. Then we define the $d$-dimensional LS-sequence in base $\mathcal B$ as the sequence
\begin{equation*}
\vect{\xi}_\mathcal{B}^n=\left(\xi_{L_1,S_1}^n,\ldots,\xi_{L_d,S_d}^n\right)_{n \in \mathbb{N}}.
\end{equation*}
\end{definition}

The following theorem states that a two-dimensional LS-sequences in bases $\mathcal B=((L_1,S_1),(L_2,S_2))$, where the one-dimensional components are low-discrepancy sequences, is \emph{not} dense in $[0,1]^2$ if there exist integers $m$ and $k$ such that $\frac{\beta_1^{k+1}}{\beta_2^{m+1}}\in\Q$. For example, in the case $(L_1,S_1)=(1,1)$ and $(L_2,S_2)=(4,1)$ we have $\beta_2=\beta_1^3$.

\begin{theorem} \label{th1}
Let $\mathcal B=((L_1,S_1),(L_2,S_2))$ with $L_i> S_i-1\geq 0$ and assume that there exist integers $m$ and $k$ such that $\frac{\beta_1^{k+1}}{\beta_2^{m+1}}\in\Q$. Then the two-dimensional LS-sequence $\vect{\xi}_\mathcal{B}^n$ is not uniformly distributed, and not even dense in $[0,1]^2$.
\end{theorem}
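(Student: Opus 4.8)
The plan is to place both coordinates inside a common real quadratic field and to exploit the fact that the low-discrepancy hypothesis is secretly a Pisot condition. First I would record the algebraic setup. Since $\gamma_i:=1/\beta_i$ is a root of $x^2-L_ix-S_i$ with Galois conjugate $\gamma_i''=-S_i\beta_i$, one has $|\gamma_i''|=S_i\beta_i<1$ precisely when $L_i\ge S_i$, i.e.\ precisely under the assumption $L_i>S_i-1$; thus each $\gamma_i$ is a Pisot number. The relation $\beta_1^{k+1}/\beta_2^{m+1}\in\Q$ then forces $\Q(\beta_1)=\Q(\beta_2)=:K$ (a single real quadratic field) and yields an identity $\gamma_1^{k+1}=c\,\gamma_2^{m+1}$ with $c\in\Q^\times$; I write $\sigma$ for the non-trivial automorphism of $K$ and $\iota$ for the identity embedding. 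Taking field norms gives $c^2=S_1^{k+1}/S_2^{m+1}$, so in particular $c=1$ in the unit case $S_1=S_2=1$, which is the situation of the motivating example $(1,1),(4,1)$, where $\gamma_2=\gamma_1^3$.

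Second, I would use the recursive Definition~\ref{Def:LSpoints} to show that every point $\xi_{L_i,S_i}^n$ is a \emph{finite} $\beta_i$-expansion $\sum_{r\ge 1}d_r\beta_i^r$ with digits bounded in terms of $L_i+S_i$, simply because each point is obtained from $0$ by finitely many maps $x\mapsto x+i\beta_i^r+j\beta_i^{r+1}$. Hence $\xi_i^n\in\Z[\beta_i]\cap[0,1)$. When $S_i=1$ the numbers $\gamma_i$ are units, $\beta_i$ is an algebraic integer, and these points lie in the ring of integers $\mathcal O_K$, a rank-two lattice whose image $(\iota,\sigma)(\mathcal O_K)\subset\R^2$ is discrete; when $S_i>1$ the number $\beta_i$ is only an $S$-integer for $S=\{p:p\mid S_1S_2\}$, and the denominators of $\xi_i^n$ grow with the expansion length. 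The Pisot property enters through the standard estimate that, for a finite $\beta_i$-expansion of length $N$, the rescaled integer $\gamma_i^{N}\xi_i^n=\sum_{j} d_{N-j}\gamma_i^{\,j}\in\Z[\gamma_i]\subset\mathcal O_K$ has Galois conjugate bounded by a constant independent of $n$.

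Third, and this is the heart, I would compare the two coordinates at the common index $n$. The length of the $\beta_i$-expansion of $\xi_i^n$ is $\asymp\log n/\log\gamma_i$, so $\gamma_1^{k+1}=c\,\gamma_2^{m+1}$ forces the two lengths into the asymptotic ratio $(m+1):(k+1)$ and, more importantly, links the two $(L_i,S_i)$-numeration systems of $n$: reading the digit string of $\xi_1^n$ in blocks of length $k+1$ should reproduce, up to a \emph{bounded} number of carry- and admissibility-corrections, the digit string of $\xi_2^n$ read in blocks of length $m+1$. In the unit case $c=1$ this synchronisation makes the fixed linear form $\delta_n:=\xi_1^n-\xi_2^n$ a bounded-length element of $\mathcal O_K$, so that $|\iota(\delta_n)|<1$ while $|\sigma(\delta_n)|\le C$ uniformly in $n$; hence $\delta_n$ lies in the intersection of the lattice $(\iota,\sigma)(\mathcal O_K)$ with a fixed box, which is a \emph{finite} set $D$. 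Choosing any $d_0\in(-1,1)\setminus D$ then exhibits a non-empty open strip $\{(x,y)\in(0,1)^2:|x-y-d_0|<\varepsilon\}$ that the sequence never enters, proving that $\vect{\xi}_{\mathcal B}^n$ is not dense.

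The main obstacle is to run the third step when $c\neq 1$ (the genuinely non-unit case $S_i>1$). There no fixed linear form can succeed, because the block-matching introduces a scaling factor $c^{M}$ that grows exponentially in the number $M$ of blocks, and the $\xi_i^n$ are no longer confined to a single lattice; the naive difference then has unbounded conjugate. Handling this requires tracking the block decomposition quantitatively and controlling the primes dividing $c$ by a supplementary $p$-adic (or $S$-unit) estimate, so that one still extracts a \emph{discrete} invariant of the pair $(\xi_1^n,\xi_2^n)$ and hence an avoided open set. Verifying that the two numeration systems synchronise with only boundedly many corrections, uniformly in $n$, is the technical crux on which the whole argument rests.
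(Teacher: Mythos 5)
The heart of your argument---the third step---fails, and it fails already in the motivating unit case $(L_1,S_1)=(1,1)$, $(L_2,S_2)=(4,1)$, where $\beta_2=\beta_1^3$ and your $c=1$. In this case the counting sequences satisfy $t^{(2)}_i=t^{(1)}_{3i}$ for all $i$ (both sides obey the recurrence $x_i=4x_{i-1}+x_{i-2}$ with initial values $1$ and $5$). Now take $n=t^{(2)}_i=t^{(1)}_{3i}$. By Definition~\ref{Def:LSpoints}, the point of index $t_j$ is the first point of the new generation, $\xi_{L,S}^{t_j}=\psi^{j+1}_{1,0}(0)=\beta^{j+1}$; hence $\xi_{1,1}^{n}=\beta_1^{3i+1}$ and $\xi_{4,1}^{n}=\beta_2^{i+1}=\beta_1^{3i+3}$, so that
\[
\delta_n=\xi_{1,1}^{n}-\xi_{4,1}^{n}=\beta_1^{3i+1}\bigl(1-\beta_1^{2}\bigr)=\beta_1^{3i+2},
\]
using $1-\beta_1^2=\beta_1$. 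Thus $\delta_n$ runs through infinitely many distinct values and $|\sigma(\delta_n)|=|\sigma(\beta_1)|^{3i+2}\to\infty$, so there is no uniform bound $|\sigma(\delta_n)|\le C$ and no finite exceptional set $D$: the lattice-point-in-a-fixed-box argument, and with it the avoided strip, collapses. The error in the heuristic is that ``boundedly many carry/admissibility corrections'' between the two digit strings does not bound the conjugate of the difference: a single digit discrepancy at depth $j$ contributes on the order of $|\sigma(\beta_1)|^{j}$ to $\sigma(\delta_n)$, and in the example above the two discrepancies sit at depth about $3i$, which grows with $n$. On top of this there is the gap you acknowledge yourself: the non-unit case $c\neq 1$ (any admissible pair with some $S_i\ge 2$) is not treated at all, although the theorem covers it.

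For contrast, the paper never compares the two coordinates as algebraic numbers; it compares the two numeration systems of the \emph{index} $N$, which is exactly what sidesteps the unbounded-conjugate problem. If $\vect{\xi}_\mathcal{B}^N$ lay in a product of elementary intervals $I_{x_1}^{(k+\tilde k)}\times I_{x_2}^{(m+\tilde m)}$ with $x_1\neq x_2$, then Lemma~\ref{Lem:ABcount} gives two representations $N=x_1+A_1t^{(1)}_k+B_1l^{(1)}_k=x_2+A_2t^{(2)}_m+B_2l^{(2)}_m$, and the counting formula of Lemma~\ref{Lem:Discrepancy-1-dim}---this is the only place where the Pisot-type condition $S_i\beta_i<1$, i.e.\ $L_i>S_i-1$, is used, namely to make the error terms $R_i$ geometrically small in $\tilde k,\tilde m$---expresses $B_i$ linearly in $A_i$ up to such errors. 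Substituting produces a linear relation in which $A_2$ carries the rational coefficient $\beta_1^{k+1}/\beta_2^{m+1}=p/q$ while the constant term $(x_2-x_1)\beta_1^{k+1}$ is irrational by Lemma~\ref{lem:irrational}; clearing denominators yields ``even integer equals odd integer plus an error of modulus less than $1$'', a contradiction that is uniform in $N$ and completely insensitive to whether the $\gamma_i$ are units. If you want to salvage an algebraic route, the invariant must be built from counting functions such as $A_{x}^{(k)}(N)$, not from the points themselves; as the computation above shows, the points' conjugates cannot be controlled.
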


On the other hand, we have not been able to derive any positive results, proving uniform distribution of a LS-sequence for an appropriate choice of $L_1,S_1$ and $L_2,S_2$ (except for the case of the Halton sequence). So up to date not a single example of parameters $L_1,S_1,L_2,S_2$ is known, for which either $S_1 \neq 0$ or $S_2 \neq 0$ and the corresponding two-dimensional LS-sequence is uniformly distributed.\\

Note that Theorem~\ref{th1} can also be applied to the multidimensional case, since for any multidimensional sequence of points, which is uniformly distributed, all lower-dimensional projections also have to be uniformly distributed. More precisely, we immediately get the following corollary.

\begin{corollary} \label{co1}
Let $\mathcal B=((L_1,S_1),\dots,(L_d,S_d))$ with $L_i> S_i-1\geq 0$ and assume that there exist numbers $u,w \in \{1, \dots, d\}$ and integers $m$ and $k$ such that $\frac{\beta_u^{k+1}}{\beta_w^{m+1}}\in\Q$. Then the $d$-dimensional LS-sequence $\vect{\xi}_\mathcal{B}^n$ is not uniform distributed, and not even dense in $[0,1]^d$.
\end{corollary}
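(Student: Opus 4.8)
The plan is to prove the stronger statement that the closure of the point set $\{(\xi_1^n,\xi_2^n):n\in\mathbb N\}$ (writing $\xi_i^n$ for $\xi_{L_i,S_i}^n$) has two-dimensional Lebesgue measure zero; since a closed set of $\lambda_2$-measure zero cannot equal $[0,1]^2$, this yields non-density at once, and as uniform distribution implies density it also yields the failure of u.d. The first ingredient is an explicit \emph{radical-inverse description} of the one-dimensional sequences, generalising the van der Corput case $S=0$ recorded after \eqref{monna}: unwinding the recursive rule of Definition~\ref{Def:LSpoints}, each point $\xi_i^n$ is obtained by expanding the index $n$ in the LS-numeration system attached to the sequences $(t_g^{(i)}),(l_g^{(i)})$ and reading the resulting admissible digit string against the powers $\beta_i,\beta_i^2,\dots$, so that $\xi_i^n=\sum_g c_g^{(i)}(n)\,\beta_i^{\,g}$. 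In particular $\xi_i^n$ is pinned down to within $O(\beta_i^{\,r})$ by finitely many of the lowest-order LS-digits of $n$, i.e.\ by the image of $n$ in a suitable finite quotient of the $i$-th numeration system. I would first make this correspondence precise, which also requires handling the minor boundary/duplicate subtleties in the ordering of Definition~\ref{Def:LSpoints}.

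Next I would exploit the hypothesis. Since $\beta_i$ is the reciprocal of the quadratic Pisot number $B_i$ (root of $x^2-L_ix-S_i$, whose conjugate lies in the unit disc precisely because $L_i>S_i-1$), both $\beta_1,\beta_2$ lie in a common field, and the relation $\beta_1^{k+1}/\beta_2^{m+1}\in\Q$ provides a common super-scale $\rho:=\beta_1^{k+1}\asymp\beta_2^{m+1}$: one super-step contracts \emph{both} coordinates by the comparable factor $\rho$. This is the point at which the two numeration systems become \emph{commensurable}. The crucial quantitative consequence I would extract is a Chinese-Remainder-type bound: because the index $n$ is shared by the two coordinates, the joint lowest-order-digit data of $\xi_1^n$ and $\xi_2^n$ at the common scale $\delta=\rho^{\,r}$ is governed by the image of $n$ in a \emph{single} finite quotient of size $O(\delta^{-1})$, rather than by two independent residues of total size $\asymp\delta^{-2}$. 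Equivalently, as $n$ ranges over $\mathbb N$ the pairs $(\xi_1^n,\xi_2^n)$ meet at most $O(\delta^{-c})$ of the $\delta$-cells of the grid, for some fixed exponent $c<2$.

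Granting this cell count, the conclusion is immediate from box-counting: the upper box-counting dimension of $\overline{\{(\xi_1^n,\xi_2^n)\}}$ is $\le c<2$, hence $\lambda_2$ of the closure is $0$. Therefore the closure is a proper, nowhere dense, measure-zero subset of $[0,1]^2$, the sequence is not dense, and \emph{a fortiori} not uniformly distributed; Corollary~\ref{co1} then follows by projecting onto the $(u,w)$-coordinates. As a by-product one reads off an explicit open box avoided by the sequence, matching the ``spectacular'' failure described for $(L_1,S_1)=(1,1)$, $(L_2,S_2)=(4,1)$, where $\rho=\beta_1^3=\beta_2$.

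The main obstacle is the commensurability count in the second step: converting the multiplicative relation $\beta_1^{k+1}=q\,\beta_2^{m+1}$ (with $q\in\Q$) into the statement that the shared index forces the joint digit data into an $O(\delta^{-c})$-set with $c<2$. This is genuinely number-theoretic — it is the analogue, for the two non-standard Pisot numeration systems, of the failure of the Chinese Remainder Theorem for non-coprime bases of van der Corput sequences — and it is here that a direct Diophantine analysis of the simultaneous digit equations is required, together with uniform control in $n$ of the scale-locking $\beta_1^{k+1}\asymp\beta_2^{m+1}$. By contrast, once this bound is in hand the passage to a measure-zero closure, and thence to non-density, is routine.
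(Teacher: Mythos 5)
Your final step---projecting onto the $(u,w)$-coordinates---is exactly the paper's entire proof of Corollary~\ref{co1}: since density (and uniform distribution) of a $d$-dimensional sequence is inherited by every two-dimensional coordinate projection, the corollary is immediate from Theorem~\ref{th1}. The problem is everything before that step. You do not invoke Theorem~\ref{th1}; you attempt to re-derive the two-dimensional statement by a box-counting route, and the pivotal claim of that route---that at scale $\delta=\rho^{\,r}$ the pairs $(\xi_1^n,\xi_2^n)$ occupy only $O(\delta^{-c})$ of the $\delta$-cells for some fixed $c<2$---is nowhere proved. You yourself flag it as ``the main obstacle'' requiring ``a direct Diophantine analysis,'' but that analysis \emph{is} the mathematical content of the statement; the surrounding steps (upper box dimension $<2$ implies $\lambda_2$-measure zero, measure zero of the closure implies non-density, projection) are routine. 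A proposal that leaves precisely this step open is not a proof.

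Moreover, the gap is not merely an omitted computation: the ``finite quotient'' framework on which your cell count rests does not exist for LS numeration with $S\geq 1$. In the $b$-adic case the low-order digits of $n$, which determine $\phi_b(n)$ to within $b^{-r}$, are the residue of $n$ modulo $b^r$, so joint digit data of two coordinates is governed by a Chinese-Remainder quotient. For an LS-sequence the digit algorithm is greedy and top-down: by Lemma~\ref{Lem:Carbone}, $\xi_{L,S}^N\in I_x^{(k)}$ if and only if $x$ is the truncated representation of $N$, and the set of such $N$ is $\bigl\{x+\sum_{i\geq k}(\epsilon_i t_i+\eta_i l_i)\bigr\}$ over admissible digit strings. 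This set has asymptotic density $\beta^k$, which is irrational by Lemma~\ref{lem:irrational}, so it is not an arithmetic progression and is not the preimage of a residue class under any modulus; hence ``the image of $n$ in a single finite quotient of size $O(\delta^{-1})$'' has no meaning here, and the CRT analogy cannot be transferred directly. The paper's proof of Theorem~\ref{th1} is exactly the Diophantine substitute you would need: the shared index $N$ yields the relation \eqref{eq:N} via Lemma~\ref{Lem:ABcount}; Lemmas~\ref{Lem:Discrepancy-1-dim} and~\ref{Lem:Aux} convert it into the linear equation \eqref{eq:A1} between $A_1$ and $A_2$, whose error term $\tilde R$ is geometrically small precisely because $L_i>S_i-1$ (equivalently $S_i\beta_i<1$); writing $\beta_1^{k+1}/\beta_2^{m+1}=p/q$ and using the irrationality of $\beta_1^{k+1}$ then forces the parity contradiction in \eqref{eq:Integers}, exhibiting one explicit empty box $\tilde I$. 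Note also that a single empty box already suffices for non-density; your stronger assertion that the closure has $\lambda_2$-measure zero may well be true, but it demands strictly more than the corollary requires, and your outline establishes neither.
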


The next theorem characterizes another class of two-dimensional LS-sequences, which are not dense in $[0,1]^2$.

\begin{theorem}\label{th2}
Let $\mathcal B=((L_1,S_1),(L_2,S_2))$ and assume that $\gcd(L_1, S_1, L_2, S_2) > 1$.  Then the two-dimensional LS-sequence $\vect{\xi}_\mathcal{B}^n$ is not dense in $[0,1]^2$.
\end{theorem}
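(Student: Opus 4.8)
The plan is to fix a prime $p$ dividing $g:=\gcd(L_1,S_1,L_2,S_2)>1$ and to exhibit an explicit two-dimensional interval which contains no point of the sequence $\vect{\xi}_\mathcal{B}^n$; this of course implies that the sequence is not dense. Since $p\mid L_i$ and $p\mid S_i$ for $i=1,2$ (note $p\mid 0$, so the case $S_i=0$ is included), I first record the arithmetic input that drives everything: the recurrences $t_k=L_i t_{k-1}+S_i t_{k-2}$ and $l_k=L_i l_{k-1}+S_i l_{k-2}$, together with $t_0=l_0=1$, $t_1=L_i+S_i$ and $l_1=L_i$, force $t_k\equiv l_k\equiv 0\pmod p$ for every $k\geq 1$.

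The heart of the proof is the following claim about a single one-dimensional LS-sequence with $p\mid L$ and $p\mid S$. For a point $\xi$ let $I(\xi)\in\{0,1,\dots,L+S-1\}$ be the index of the interval of the first partition $\rho_{L,S}\omega$ containing $\xi$, the intervals being numbered from left to right. I claim that
\[ I(\xi_{L,S}^n)\equiv n \pmod p \quad\text{for all } n. \]
I would prove this by induction on $n$, following the recursive description in Definition~\ref{Def:LSpoints}. For $n<t_1$ the point $\xi_{L,S}^n$ is the left endpoint of the $n$-th interval of $\rho_{L,S}\omega$, so $I(\xi_{L,S}^n)=n$. For $n\geq t_1$ choose $k$ with $t_k\leq n<t_{k+1}$; then $\xi_{L,S}^n=\psi^{k+1}_{i,j}(\xi_{L,S}^r)=\xi_{L,S}^r+i\beta^{k+1}+j\beta^{k+2}$ for suitable $i,j$ and a base index $r$ with $0\leq r<l_k$ and $r\equiv n-t_k\pmod{l_k}$. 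Two observations finish the step. First, $\xi_{L,S}^r$ is the left endpoint of a long (length-$\beta^k$) interval of $\rho^k_{L,S}\omega$, and $\xi_{L,S}^n$ lies in that same interval $[\xi_{L,S}^r,\xi_{L,S}^r+\beta^k)$ because $i\beta^{k+1}+j\beta^{k+2}<\beta^k$; since the partitions are nested, this interval is contained in a single interval of $\rho_{L,S}\omega$, whence $I(\xi_{L,S}^n)=I(\xi_{L,S}^r)$. Second, because $p\mid t_k$ and $p\mid l_k$, reduction modulo $l_k$ does not change the residue class modulo $p$, so $r\equiv n-t_k\equiv n\pmod p$. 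The induction hypothesis then gives $I(\xi_{L,S}^n)=I(\xi_{L,S}^r)\equiv r\equiv n\pmod p$.

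Granting the claim for both coordinates, I would finish as follows. Writing $I_1,I_2$ for the interval indices in the two one-dimensional sequences, the claim yields $I_1(n)\equiv n\equiv I_2(n)\pmod p$ for every $n$, so the two components $I_1(n)$ and $I_2(n)$ are always congruent modulo $p$. Now $p\mid L_i$ forces $L_i\geq p\geq 2$, so in each coordinate the intervals with indices $0$ and $1$ both occur and are nondegenerate long intervals, namely $[0,\beta_1)$ in the first coordinate and $[\beta_2,2\beta_2)$ in the second. Since $0\not\equiv 1\pmod p$, there is no index $n$ with $I_1(n)=0$ and $I_2(n)=1$ simultaneously. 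Consequently the box $[0,\beta_1)\times[\beta_2,2\beta_2)$, which has nonempty interior, contains no point of $\vect{\xi}_\mathcal{B}^n$, and the two-dimensional sequence is not dense in $[0,1]^2$. (When $S_1=S_2=0$ this recovers the familiar non-density of a Halton sequence built from non-coprime bases.)

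The step I expect to require the most care is the combinatorial bookkeeping inside the inductive claim: one must justify rigorously, from Definition~\ref{Def:LSpoints}, that the first $l_k$ points $\xi_{L,S}^0,\dots,\xi_{L,S}^{l_k-1}$ are exactly the left endpoints of the long intervals of $\rho^k_{L,S}\omega$, and that the base index of $\xi_{L,S}^n$ is obtained from $n$ by the stated reduction modulo $l_k$. This includes a careful treatment of the redundant map $\psi_{L,S}$, which merely reproduces an already-listed left endpoint and must be discarded so that the inner loop over base points genuinely has length $l_k$. Once this description of the enumeration is in place, the arithmetic---$t_k\equiv l_k\equiv 0\pmod p$ and the resulting constraint $I_1(n)\equiv I_2(n)\pmod p$---is immediate.
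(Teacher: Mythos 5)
Your proposal is correct, and it ends at exactly the same obstruction the paper uses: the box $[0,\beta_1)\times[\beta_2,2\beta_2)$ together with the divisibility $t_k^{(i)}\equiv l_k^{(i)}\equiv 0 \pmod p$ for all $k\geq 1$. The difference is how the key congruence is obtained. The paper gets it in one line from its Lemma~\ref{Lem:ABcount} (which in turn rests on the digit-expansion machinery of Lemmas~\ref{Lem:Digits} and~\ref{Lem:Carbone}, built for Theorem~\ref{th1}): if $\xi_{L,S}^{N}\in I_x^{(1)}$ with $x<l_1$, then $N=x+At_1+Bl_1$, hence $N\equiv x \pmod b$ with $b=\gcd(L_1,S_1,L_2,S_2)$; taking $x=0$ in the first coordinate and $x=1$ in the second forces $N\equiv 0$ and $N\equiv 1 \pmod b$ simultaneously, a contradiction. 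You instead prove the congruence $I(\xi_{L,S}^n)\equiv n \pmod p$ by a self-contained induction on the recursive generation of the points in Definition~\ref{Def:LSpoints}, using only that the partitions are nested and that reduction of $n$ by $t_k$ and modulo $l_k$ preserves the residue class mod $p$. This bypasses Section~2 entirely and yields a slightly sharper statement (the exact residue of the first-level interval index of \emph{every} point), at the cost of the bookkeeping you honestly flag: one must check that $\xi_{L,S}^0,\dots,\xi_{L,S}^{l_k-1}$ are precisely the left endpoints of the long intervals of $\rho_{L,S}^k\omega$, and discard the redundant map $\psi_{L,S}$ in Definition~\ref{Def:LSpoints} (which indeed reproduces existing points, since $\psi_{L,S}^{k+1}(x)=x+\beta^k(L\beta+S\beta^2)=x+\beta^k$, and whose removal makes the count of new points equal $t_{k+1}-t_k=(L+S-1)l_k$ as in Lemma~\ref{Lem:MixedRec}). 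These facts are true and your sketch of them is right; the paper's Lemma~\ref{Lem:ABcount} simply has this bookkeeping already absorbed. In short: same witness box, same arithmetic obstruction, but your supporting lemma is a direct induction on the construction rather than the paper's expansion-based counting lemma --- more elementary and self-contained, though longer if written out in full.
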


Note that Theorem~\ref{th2} also includes the case of the Halton sequence. As above we can state a corollary which describes the $d$-dimensional situation.

\begin{corollary} \label{col2}
Let $\mathcal B=((L_1,S_1),\dots,(L_d,S_d))$ and assume that $\gcd(L_i, S_i, L_j, S_j) > 1$ for some $i,j \in \{1, \ldots, d\}, ~ i \neq j$.  Then the $d$-dimensional LS-sequence $\vect{\xi}_\mathcal{B}^n$ is not dense in $[0,1]^d$.
\end{corollary}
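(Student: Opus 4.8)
The plan is to reduce the $d$-dimensional statement to the two-dimensional case already settled in Theorem~\ref{th2}, exploiting the elementary topological fact that a continuous image of a dense set is dense. First I would fix a pair of indices $i \neq j$ for which $\gcd(L_i,S_i,L_j,S_j)>1$, and consider the coordinate projection $\pi\colon [0,1]^d \to [0,1]^2$ defined by $\pi(x_1,\dots,x_d) = (x_i,x_j)$. Applying $\pi$ to the sequence $\vect{\xi}_\mathcal{B}^n$ produces precisely the two-dimensional sequence $(\xi_{L_i,S_i}^n, \xi_{L_j,S_j}^n)_{n\in\mathbb{N}}$, which by Definition~\ref{def6} is the two-dimensional LS-sequence in base $((L_i,S_i),(L_j,S_j))$.

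The key step is then to invoke Theorem~\ref{th2}. Since $\gcd(L_i,S_i,L_j,S_j)>1$ by assumption, the theorem guarantees that this projected two-dimensional LS-sequence is not dense in $[0,1]^2$; in other words, there exists a nonempty open box $B \subseteq [0,1]^2$ containing no point of the projected sequence. I would record this gap explicitly, since it is exactly the obstruction that will be transported back up to dimension $d$.

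To finish, I would argue by contraposition. Suppose $\vect{\xi}_\mathcal{B}^n$ were dense in $[0,1]^d$. Because $\pi$ is continuous and surjective onto $[0,1]^2$, the image of a dense subset of $[0,1]^d$ under $\pi$ is dense in $[0,1]^2$ (the closure of the image contains the image of the closure). Hence the projected sequence would be dense in $[0,1]^2$, and in particular the open box $B$ would have to contain a point of it, contradicting the conclusion of the previous paragraph. Therefore $\vect{\xi}_\mathcal{B}^n$ cannot be dense in $[0,1]^d$.

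I do not expect any genuine obstacle here, as the corollary is a direct reduction of the same flavor as Corollary~\ref{co1}. The only point warranting a line of justification is the observation that selecting two coordinates of a $d$-dimensional LS-sequence yields an honest two-dimensional LS-sequence in the corresponding base $((L_i,S_i),(L_j,S_j))$, so that Theorem~\ref{th2} applies verbatim; once this is noted, the density-of-projections argument closes the proof.
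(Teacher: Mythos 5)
Your proposal is correct and matches the paper's own (implicitly sketched) argument: the paper likewise reduces to Theorem~\ref{th2} via the coordinate projection onto the $i$-th and $j$-th components, using the fact that projections of dense sets under continuous surjections are dense. The observation that the projected sequence is exactly the two-dimensional LS-sequence in base $((L_i,S_i),(L_j,S_j))$ is precisely the point the paper relies on, so no further justification is needed.
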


In the next section we provide several auxiliary results concerning one-dimensional LS-sequences. These lemmas will be essential in the proofs of Theorem~\ref{th1} and Theorem~\ref{th2}, which are presented in Section~\ref{Sec:ProofTh1}. 

\section{Points in elementary intervals}

Before we define elementary intervals and prove some of their properties, we prove the following recurrence relations for the sequences $t_n$ and $l_n$. 

\begin{lemma}\label{Lem:MixedRec}
We have
\[t_{n+1}=t_n+(L+S-1)l_n \quad \text{and}\quad l_{n+1}=t_n+(L-1)l_n\]
for all $n\geq 0$.
\end{lemma}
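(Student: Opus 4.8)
The plan is to reduce both identities to the two elementary ``one refinement step'' relations
\[
l_{n+1} = L\,l_n + s_n \qquad\text{and}\qquad s_{n+1} = S\,l_n,
\]
and then to eliminate $s_n$ by means of the identity $t_n = l_n + s_n$, which holds for all $n\geq 0$ because, by the stated facts, $\rho^n_{L,S}\omega$ consists only of intervals of length $\beta^n$ or $\beta^{n+1}$, so the total count is the sum of the long and the short counts. Granting the two one-step relations, I would write $s_n = t_n - l_n$ and compute
\[
l_{n+1} = L\,l_n + (t_n - l_n) = t_n + (L-1)\,l_n,
\]
\[
t_{n+1} = l_{n+1} + s_{n+1} = (L\,l_n + s_n) + S\,l_n = (L+S)\,l_n + (t_n - l_n) = t_n + (L+S-1)\,l_n,
\]
which are exactly the two asserted formulas.

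It remains to justify the one-step relations, and this is the only real content. I would argue directly from the refinement procedure. At stage $n$ the partition $\rho^n_{L,S}\omega$ consists of $l_n$ intervals of the maximal length $\beta^n$ and $s_n$ intervals of length $\beta^{n+1}$. Passing to $\rho^{n+1}_{L,S}\omega$, only the maximal-length intervals are subdivided: each interval of length $\beta^n$ is split positively homothetically to $\rho_{L,S}$, hence into $L$ intervals of length $\beta\cdot\beta^n=\beta^{n+1}$ and $S$ intervals of length $\beta^2\cdot\beta^n=\beta^{n+2}$, while the $s_n$ intervals of length $\beta^{n+1}$ are left untouched. Counting the intervals of each length in the new partition then gives $l_{n+1}=L\,l_n+s_n$ (the new long intervals come from the $L$-fold splitting of the old long intervals together with the old short intervals, which now have maximal length $\beta^{n+1}$) and $s_{n+1}=S\,l_n$ (the new short intervals arise solely from the $S$-fold splitting of the old long intervals). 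The base case $n=0$ is consistent, since $\omega$ has the single interval $[0,1)$ of length $\beta^0=1$, so $l_0=1$, $s_0=0$, and the relations yield $l_1=L$, $s_1=S$ as required.

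As an alternative to this combinatorial bookkeeping, one may verify the one-step relations purely algebraically from the closed forms \eqref{Rec:tk}--\eqref{Rec:sk}: substituting them and comparing the coefficients of $\beta^{-n}$ and of $(-S\beta)^n$ reduces the claim to scalar identities such as $\tau_0 = \lambda_0(1+S\beta)$, together with the analogous identities for the $(-S\beta)^n$-coefficients; these follow from the defining relation $\beta^{-1}=L+S\beta$ (equivalently $L\beta+S\beta^2=1$) and the explicit value $S\beta=\tfrac12(-L+\sqrt{L^2+4S})$. I expect the only point demanding genuine care to be this coefficient matching, or — in the combinatorial approach, which I find cleaner and which also covers the degenerate case $S=0$ without difficulty — the precise assertion that the non-maximal intervals are exactly the short ones and are never refined at the current stage. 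Both the reduction step and the final substitution are routine.
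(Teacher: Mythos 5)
Your proof is correct, but it takes a genuinely different route from the paper. The paper proves the lemma by induction on $n$: the base case is checked from the initial values $t_0=l_0=1$, $t_1=L+S$, $l_1=L$, and the induction step substitutes the hypothesis into the second-order recurrences $t_{n+1}=Lt_n+St_{n-1}$ and $l_{n+1}=Ll_n+Sl_{n-1}$ (already stated in the paper, following Carbone) and regroups terms. You instead go back to the splitting procedure itself and extract the first-order relations $l_{n+1}=Ll_n+s_n$, $s_{n+1}=Sl_n$, together with $t_n=l_n+s_n$, and then eliminate $s_n$ by pure algebra, with no induction on the identities themselves. Your combinatorial bookkeeping is sound: every interval of length $\beta^n$ splits into $L$ of length $\beta^{n+1}$ and $S$ of length $\beta^{n+2}$, the old short intervals become the new long ones, and the maximal length at stage $n$ really is $\beta^n$ because $l_n\geq 1$ (which follows from $L\geq 1$ and $l_0=1$) --- the one point of care you correctly flagged. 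What your approach buys is transparency and self-containment: it explains \emph{why} the identities hold, it re-derives (rather than assumes) the second-order recurrences, and it covers $S=0$ with no extra effort. What the paper's approach buys is brevity given its setup: since the recurrences $t_n=Lt_{n-1}+St_{n-2}$, $l_n=Ll_{n-1}+Sl_{n-2}$ and the initial values are already on record, a mechanical two-line induction suffices, with no need to re-examine the geometry of the partitions. Your algebraic alternative via the closed forms \eqref{Rec:tk}--\eqref{Rec:lk} would also work but is the least illuminating of the three; the combinatorial version you prefer is the better choice.
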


\begin{proof}
We prove this claim by induction on $n$. For $n=0$ we have
\[t_1=L+S=1+(L+S-1)=t_0+(L+S-1)l_0\]
and
\[l_1=L=1+(L-1)=t_0+(L-1)l_0.\]
Assume the claim is true for all indices $\leq n$. Then we have
\begin{multline*}
t_{n+1}=Lt_n+St_{n-1}=L(t_{n-1}+(L+S-1)l_{n-1})+S(t_{n-2}+(L+S-1)l_{n-2})=\\
Lt_{n-1}+St_{n-2}+(L+S-1)(Ll_{n-1}+Sl_{n-2})=t_n+(L+S-1)l_n
\end{multline*}
and
\begin{multline*}
l_{n+1}=Ll_n+Sl_{n-1}=L(t_{n-1}+(L-1)l_{n-1})+S(t_{n-2}+(L-1)l_{n-2})=\\
Lt_{n-1}+St_{n-2}+(L-1)(Ll_{n-1}+Sl_{n-2})=t_n+(L-1)l_n
\end{multline*}
which proves the lemma.
\end{proof}

We will also need a lemma on the irrationality of $\beta$.

\begin{lemma}\label{lem:irrational}
Let $L>S-1\geq 0$, then $\beta^k$ is irrational for every positive integer $k$. 
\end{lemma}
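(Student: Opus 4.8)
The plan is to treat $\beta$ as a quadratic irrational and exploit its Galois conjugate. Recall that $\beta$ is the root in $(0,1)$ of $f(x)=Sx^2+Lx-1$, so $\beta=\frac{-L+\sqrt{L^2+4S}}{2S}$; set $D:=L^2+4S$. For integer parameters the hypothesis $L>S-1\geq 0$ means precisely $L\geq S$ and $S\geq 1$, and both parts will be used.

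First I would settle the case $k=1$, i.e.\ that $\beta$ itself is irrational, which is equivalent to $D$ not being a perfect square. If $D=M^2$ with $M>L$, write $t:=M-L\geq 1$, so that $4S=(M-L)(M+L)=t(2L+t)\geq 2Lt$. The constraint $S\leq L$ gives $4S\leq 4L$, hence $2Lt\leq 4L$ and therefore $t\leq 2$. The case $t=1$ forces $4S=2L+1$, impossible by parity, and $t=2$ forces $S=L+1>L$, contradicting $L\geq S$. (Equivalently one may invoke the rational root theorem: a rational root of $f$ would be $1/q$ with $q\mid S$ and $q\geq 2$, leading to $S=q(q-L)\geq L+1$, again contradicting $L\geq S$.) Thus $\sqrt{D}$ is irrational.

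For arbitrary $k\geq 1$ I would then bring in the conjugate root $\bar\beta=\frac{-L-\sqrt{D}}{2S}$. Since $\sqrt{D}$ is irrational, $\Q(\beta)=\Q(\sqrt{D})$ is a quadratic field, and the nontrivial automorphism $\sqrt{D}\mapsto-\sqrt{D}$ sends $\beta\mapsto\bar\beta$. If $\beta^k\in\Q$ it would be fixed by this automorphism, which forces $\beta^k=\bar\beta^k$. The contradiction now comes from absolute values: since $\sqrt{D}>L$ we have $|\beta|=\frac{\sqrt{D}-L}{2S}$ and $|\bar\beta|=\frac{\sqrt{D}+L}{2S}$, so $|\bar\beta|-|\beta|=\frac{L}{S}>0$ because $L>0$. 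Hence $|\beta|<|\bar\beta|$, giving $|\beta|^k<|\bar\beta|^k$ and therefore $\beta^k\neq\bar\beta^k$ for every $k\geq 1$. This contradicts $\beta^k=\bar\beta^k$, so $\beta^k\notin\Q$, as claimed.

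The main obstacle is passing from the irrationality of $\beta$ to that of every power $\beta^k$; the resolution is the conjugate (Galois) viewpoint, whose decisive quantitative input is merely $L>0$, which guarantees that the two roots of $f$ have distinct absolute values. The remaining hypothesis $L\geq S$ enters only in the first step, but there it is essential: it is exactly what prevents $D=L^2+4S$ from being a perfect square (as one sees from examples such as $(L,S)=(1,2)$, where $\beta=\tfrac12$).
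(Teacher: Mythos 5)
Your proof is correct, and while the first half coincides with the paper's, the treatment of higher powers is genuinely different. For $k=1$ both arguments are the same computation in different clothing: the paper squeezes $L^2<L^2+4S<(L+2)^2$ using $S\le L$ and eliminates the only remaining possibility $L^2+4S=(L+1)^2$ by parity; your factorization $4S=t(2L+t)$ with $t\le 2$ is equivalent. For $k\ge 2$ the paper takes $k$ minimal with $\beta^k\in\Q$, asserts this forces $k=2$ (``$\beta$ is an algebraic integer of degree $k$''), and then computes $\beta^2=\frac{L^2+2S-L\sqrt{L^2+4S}}{2S^2}$ explicitly, which is irrational precisely because $L\neq 0$. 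Your route instead applies the nontrivial automorphism of $\Q(\sqrt{D})$: if $\beta^k\in\Q$ then $\beta^k=\bar\beta^k$, which is impossible since $|\bar\beta|-|\beta|=L/S>0$. This handles all $k$ uniformly and avoids the reduction to $k=2$ entirely --- a real advantage, since the paper's reduction step is stated loosely (minimality of $k$ does not literally make $\beta$ ``of degree $k$''; one must separately argue that a real quadratic irrational with some rational power already has rational square), whereas the conjugate argument is airtight and makes transparent that the only input needed beyond irrationality of $\beta$ is $L>0$, i.e., that the two roots of $Sx^2+Lx-1$ have distinct absolute values. Both proofs ultimately rest on the same two facts ($L\ge S$ for the base case, $L>0$ for the powers), but yours packages the second one more cleanly.
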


\begin{proof}
First, we prove that $\beta$ is irrational. In particular we have to prove that $L^2+4S$ is not a square. Therefore we note that
\[L^2<L^2+4S<L^2+4L+4=(L+2)^2,\]
thus $L^2+4S=(L+1)^2$, i.e. $S=\frac{2L+1}4\not\in \Z$. Hence $\beta$ is irrational.\\

Now suppose that $\beta^k$ is a rational for some $k>1$. Let us assume that $k$ is minimal. Then $\beta$ is an algebraic integer of degree $k$, which is a contradiction unless $k=2$, since $\beta$ is the solution of a quadratic equation. But $k=2$ yields
\[\beta^2=\frac{L^2+2S-L\sqrt{L^2+4S}}{2S^2}\in \Q,\]
which is not possible unless $L=0$.
\end{proof}

We call an interval \emph{elementary}, if it is an element of $\rho^n_{L,S} \omega$ for some $n$. Equivalently we can define elementary intervals as all intervals of the form $I_x^{(k)}=[\xi_{L,S}^x,\xi_{L,S}^x+\beta^k)$ for some $k$, where $x <l_k$. If $[\xi_{L,S}^x,\xi_{L,S}^x+\beta^k)$ is an elementary interval, then there necessarily exists an integer $y<t_k$ such that $\xi_{L,S}^x+\beta^k=\xi_{L,S}^y$. Note that in the case of the van der Corput sequence the elementary intervals defined in this way coincide with the usual ones.\\

In order to count points in elementary intervals we need a method to decide whether a point $\xi_{L,S}^N$ is contained in some given elementary interval or not. In the case of the van der Corput sequence this can be achieved by considering digit expansions as in \eqref{monna}, which motivates the following construction. Let $N\geq 0$ be a fixed integer and let $n$ be such that $t_n\leq N < t_{n+1}$. We construct two sequences $(\epsilon_k)_{0 \leq k \leq n}$ and $(\eta_k)_{0 \leq k \leq n}$ recursively in the following way:  We put 
\[N_n=N, \quad \epsilon_n=1, \quad\eta_n=\left\lfloor \frac{N_n-t_n}{l_n}\right\rfloor \quad \textrm{and} \quad N_{n-1}=N_n-t_n-\eta_nl_n.\]
For $k \leq n-1$, if $N_{k}<t_{k}$ we put $\epsilon_{k}=\eta_{k}=0$ and $N_{k-1}=N_{k}$. Otherwise we proceed as in the initial construction, i.e.\ put $\epsilon_k=1$, $\eta_k=\lfloor \frac{N_k-t_k}{l_k}\rfloor$ and $N_{k-1}=N_k-t_k-\eta_kl_k$. If $N_{k-1}=0$ we terminate and put $\epsilon_i=\eta_i=0$ for all $i<k$. Since $t_{k+1}=t_k+(L+S-1)l_k$ and $l_{k+1}\geq t_k\geq l_k$ for all $k\geq 0$, this algorithm yields a representation of $N$ in the form
\begin{equation}\label{Rep:N}
 N=\sum_{i=0}^n (\epsilon_it_i+\eta_il_i),
\end{equation}
where $\epsilon_i\in\{0,1\}$, $0\leq \eta\leq L+S-2$, and $\epsilon_i=0$ implies $\eta_i=0$. Furthermore, since $t_k+(L-1)l_k=l_{k+1}$, it follows that $\eta_i \geq L - 1$ implies $\epsilon_{i + 1} = 0$.\\ 
Note that the representation \eqref{Rep:N} is not unique. Consider e.g.\ the case $L=2,S=1$; then we have $t_2+l_2=12=t_2+t_1+l_1$. However, for the rest of this paper speaking of a representation we will always mean the representation whose coefficients $\epsilon_i$ and $\eta_i$ were constructed as explained in the algorithm above (i.e.\ in the above example the representation we choose is $12=t_2+l_2$).\\

In order to establish unique ``digit expansions`` we use the following Lemma.

\begin{lemma}\label{Lem:Digits}
There is a bijection between positive integers and finite sequences of the form
\[\mathcal D=((\epsilon_n,\eta_n),\ldots,(\epsilon_0,\eta_0))\]
such that $\epsilon_i\in\{0,1\}$, $\epsilon_n=1$, $0\leq \eta\leq L+S-2$, $\epsilon_i=0$ implies $\eta_i=0$ and $\eta_i\geq L-1$ implies $\epsilon_{i+1}=0$.
This bijection is given by
\[\Psi(\mathcal D)=\sum_{i=0}^n (\epsilon_it_i+\eta_il_i)\]
and its inverse
\[\Psi^{-1}(N)=((\epsilon_n,\eta_n),\ldots,(\epsilon_0,\eta_0)),\]
where the $\epsilon_i$ and $\eta_i$ are computed by the algorithm described above.
\end{lemma}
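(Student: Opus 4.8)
The plan is to establish that $\Psi$ and $\Psi^{-1}$ are mutually inverse maps between the set of admissible digit sequences $\mathcal D$ and the positive integers, which amounts to proving three things: that the algorithm always terminates and produces an admissible sequence, that $\Psi$ is well-defined on admissible sequences and recovers $N$, and that distinct admissible sequences give distinct integers (injectivity), together with the fact that every admissible sequence arises (surjectivity onto the correct range).

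First I would verify that the algorithm described before the lemma does produce a sequence satisfying all the stated admissibility constraints. The constraints $\epsilon_i\in\{0,1\}$, $\epsilon_n=1$, and ``$\epsilon_i=0$ implies $\eta_i=0$'' are immediate from the construction. The bound $0\leq\eta_i\leq L+S-2$ follows because when $\epsilon_k=1$ we have $t_k\leq N_k<t_{k+1}=t_k+(L+S-1)l_k$, so $0\leq N_k-t_k<(L+S-1)l_k$ and hence $\eta_k=\lfloor(N_k-t_k)/l_k\rfloor\leq L+S-2$. The condition ``$\eta_i\geq L-1$ implies $\epsilon_{i+1}=0$'' is exactly the content of the remark using $l_{k+1}=t_k+(L-1)l_k$ from Lemma~\ref{Lem:MixedRec}: if $\eta_k\geq L-1$ then $N_k\geq t_k+(L-1)l_k=l_{k+1}$, which forces the relevant indicator above to vanish. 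Termination is clear since the $N_k$ form a strictly decreasing sequence of nonnegative integers once a nonzero term is subtracted, and the construction shows $N_{k-1}<t_k\leq N_k$ in the active case.

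Next I would show $\Psi(\Psi^{-1}(N))=N$, which is just telescoping the recursion $N_{k-1}=N_k-t_k-\eta_k l_k$ (active case) or $N_{k-1}=N_k$ (inactive case) down from $N_n=N$ to $N_{-1}=0$; summing the subtracted quantities reproduces~\eqref{Rep:N}. The genuinely substantive direction is injectivity, equivalently that the admissibility constraints pin down a unique representation, so that $\Psi^{-1}(\Psi(\mathcal D))=\mathcal D$ for every admissible $\mathcal D$. For this I would argue that the top index $n$ is determined by $t_n\leq\Psi(\mathcal D)<t_{n+1}$; the key estimate is that the maximal value of $\sum_{i=0}^{n-1}(\epsilon_i t_i+\eta_i l_i)$ over admissible lower digits is strictly less than $t_n$, which I would prove by induction using the identities $t_{n}=t_{n-1}+(L+S-1)l_{n-1}$ and $l_n=t_{n-1}+(L-1)l_{n-1}$ of Lemma~\ref{Lem:MixedRec}, carefully incorporating the carry-like constraint that $\eta_i\geq L-1$ blocks $\epsilon_{i+1}$. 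Once the leading pair $(\epsilon_n,\eta_n)$ is forced, one peels it off and induces on $n$, so that the admissible representation is unique and coincides with the algorithmic one.

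The main obstacle I anticipate is precisely this uniqueness bound: the representation~\eqref{Rep:N} is genuinely non-unique without the side conditions (as the $L=2,S=1$ example $12=t_2+l_2=t_2+t_1+l_1$ shows), so the whole weight of the lemma rests on checking that the two constraints ``$\epsilon_i=0\Rightarrow\eta_i=0$'' and ``$\eta_i\geq L-1\Rightarrow\epsilon_{i+1}=0$'' together rule out exactly the ambiguous representations and leave a single admissible one. I expect the cleanest route is to show these constraints force the greedy/maximal choice at each step, matching the algorithm, rather than to count admissible sequences and compare cardinalities; the counting approach would also work but requires separately verifying that the number of admissible sequences with top index $n$ equals $t_{n+1}-t_n$, which again reduces to the same recurrence identities.
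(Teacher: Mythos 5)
Your outline has the same skeleton as the paper's proof (check the algorithm's output is admissible and terminates, telescope to get $\Psi(\Psi^{-1}(N))=N$, then prove $\Psi^{-1}(\Psi(\mathcal D))=\mathcal D$ by induction on the length using Lemma~\ref{Lem:MixedRec} and the adjacency constraint), but there is a genuine gap exactly at the crux: you state the key estimate as ``admissible lower digits sum to strictly less than $t_n$,'' and that bound is too weak to force the leading pair. If $N=t_n+\eta_n l_n+\Psi(\mathcal E)$ and all you know is $\Psi(\mathcal E)<t_n$, then for $S\geq 1$ the lower part may satisfy $\Psi(\mathcal E)\geq l_n$, in which case $\lfloor (N-t_n)/l_n\rfloor\neq\eta_n$ and the greedy algorithm returns a different leading digit; so ``once the leading pair is forced, one peels it off'' does not go through. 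The paper's own example shows this concretely: for $L=2$, $S=1$ one has $t_1=3$, $l_1=2$, $t_2=7$, $l_2=5$, and $N=12$ admits the decompositions $t_2+l_2$ (lower part $0$) and $t_2+t_1+l_1$ (lower part $t_1+l_1=5$); both lower parts are $<t_2=7$, so your bound cannot distinguish them. What rules out the second one is the sharper estimate: since $\epsilon_n=1$ forces $\eta_{n-1}\leq L-2$, the constrained lower part satisfies $\Psi(\mathcal E)<t_{n-1}+(L-1)l_{n-1}=l_n$, and indeed $5\not< l_2=5$. The same $<l_n$ bound is also needed for your claim that the top index is pinned down by $t_n\leq \Psi(\mathcal D)<t_{n+1}$: when $\eta_n=L+S-2$ the inequality $\Psi(\mathcal D)<t_{n+1}=t_n+(L+S-2)l_n+l_n$ requires the lower part to be $<l_n$, not merely $<t_n$.

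Moreover, your induction does not even close on itself: to propagate the $<t_n$ bound to length $n$ you must handle $\eta_{n-1}=L+S-2$, which again needs the $<l_{n-1}$ bound on the tail below it, so the inductive hypothesis must be strengthened to include the constrained estimate. This is precisely how the paper proceeds: its inductive hypothesis is that the algorithm recovers every shorter admissible sequence, from which it deduces that a length-$n$ sequence starting with $(1,L-2)$ satisfies $\Psi(\mathcal E)<t_{n-1}+(L-1)l_{n-1}=l_n$; this single estimate then yields both $\Psi(\mathcal D)<t_{n+1}$ and the identification $\eta_n=\lfloor (N-t_n)/l_n\rfloor$, after which the leading pair is peeled off as you describe. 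The repair to your plan is therefore to replace the $<t_n$ estimate by this two-part statement (general admissible sequences are $<t_n$; sequences whose top digit obeys $\eta_{n-1}\leq L-2$ are $<l_n$); the remaining parts of your outline (termination, admissibility of the output, and the telescoping identity) are correct as stated.
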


\begin{proof}
Let $N>0$ be an integer. Note that $\Psi^{-1}$ is injective since by construction we have $N_k<t_{k+1}$ and therefore in every step of the algorithm the pair $(\epsilon_k,\eta_k)$ is uniquely determined.\\

It remains to prove that $\Psi^{-1}(\Psi(\mathcal D))=\mathcal D$. We prove this by induction on the length $n+1$ of $\mathcal D$. The case $n=0$ is trivial, since $\Psi(\mathcal D)=\epsilon_0+\eta_0<t_1$, and applying $\Psi^{-1}$ yields indeed $\mathcal D$. Let us assume that the algorithm yields the correct sequence $\Psi(\mathcal D)$ for all sequences $\mathcal D$ of length $\leq n$. In particular, this implies $\Psi(\mathcal D)<t_n$ for all $\mathcal D$ of length $\leq n$. Assume now that $\mathcal D$ is of length $n+1$.\\

First, we prove that $N=\Psi(\mathcal D)<t_{n+1}$. Since $\epsilon_n=1$ we have $\eta_{n-1}\leq L-2$ and by induction
we know that for a sequence $\mathcal E$ of length $n$ starting with $(1,L-2)$ we have
$\Psi(\mathcal E)<t_{n-1}+(L-1)l_{n-1}=l_n$. Hence $\Psi(\mathcal D)<t_n+(L+S-2)l_n+l_n=t_{n+1}$.\\ 

Now let $\mathcal E$ be the sequence of length $n$ induced by $\mathcal D$ by deleting the entry $(\epsilon_n,\eta_n)$. Note that $\epsilon_{n-1}$ might be zero and thus $\mathcal E$ is not a valid output of the above algorithm. If $\epsilon_i = 0$ for all $i \leq n-1$, then the proof is trivial. Assume now that at least one $\epsilon_i > 0$ for $i \leq n-1$. Then we can write $\mathcal E = (\mathcal E_1,\mathcal E_2)$, where $\mathcal E_1 = ((\epsilon_{n-1},\eta_{n-1}), \ldots, (\epsilon_{n-k},\eta_{n-k}))$ and $(\epsilon_i,\eta_i) = (0,0)$ for $n-k \leq i \leq n-1$ and $\mathcal E_2 =((\epsilon_{n-k-1},\eta_{n-k-1}), \ldots, (\epsilon_0,\eta_0))$ with $\epsilon_{n-k-1} = 1$. We obtain that $N=\Psi(\mathcal D)=t_n+\eta_n l_n+\Psi(\mathcal E_2)$ and since $\Psi(\mathcal E_2)<l_n$, we know that $\eta_n$ is the same integer which we obtain by applying our algorithm to $N$. In particular, we have
\[\Psi^{-1}(\Psi(\mathcal D))=((1,\eta_n),\mathcal E_1,\Psi^{-1}(\Psi(\mathcal E_2)))=((1,\eta_n),\mathcal E)=\mathcal D.\]
\end{proof}

Using this digit expansions we are able to prove arithmetic properties of LS-sequences. We start with a lemma which provides conditions under which a point $\xi_{L,S}^N$ lies in a certain elementary interval.

\begin{lemma} \label{Lem:Carbone}
Let $N$ be an integer with representation given in \eqref{Rep:N}. Then
\begin{equation}\label{eq:Nth_point}
\xi_{L,S}^N=
\sum_{i=0}^n \left(\beta^i \min\{L,\epsilon_i+\eta_i\}+\beta^{i+1}\max\{\epsilon_i+\eta_i-L,0\}\right). \end{equation}

Moreover let $I_x^{(k)}=[\xi_{L,S}^x,\xi_{L,S}^x+\beta^k)$, with $x <l_k$, be an elementary interval. Then $\xi_{L,S}^N \in I_x^{(k)}$ if and only if 
\[x=\sum_{i=0}^{k-1} (\epsilon_it_i+\eta_il_i)\]
is the truncated representation of $N$.

In addition let $A_x^{(k)}(N)=\sharp \{m\: :\: m\leq N, \xi_{L,S}^m \in I_x^{(k)}\}$ and assume that
\[N=x+\sum_{i=k}^{n}(\epsilon_i t_{i}+\eta_i l_{i}).\]
Then 
\[A_x^{(k)}(N)= \sum_{i=0}^{n-k}(\epsilon_{i+k} t_{i}+\eta_{i+k} l_{i}) +1.\]
\end{lemma}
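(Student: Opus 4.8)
The plan is to establish the three assertions of Lemma~\ref{Lem:Carbone} in the order they are stated, using the digit expansion from Lemma~\ref{Lem:Digits} together with the inductive structure of the LS-sequence of points in Definition~\ref{Def:LSpoints} and the mixed recurrences of Lemma~\ref{Lem:MixedRec}. All three parts should follow by induction on $n$ (the length of the representation, equivalently the index with $t_n \le N < t_{n+1}$), so the first task is to set up the induction correctly and identify how the top digit $(\epsilon_n,\eta_n)=(1,\eta_n)$ peels off.

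First I would prove the explicit formula~\eqref{eq:Nth_point} for $\xi_{L,S}^N$. The base case $n=0$ covers $0 \le N < t_1 = L+S$, where the point is simply $\xi_{L,S}^N = N\beta$ if $N<L$ and $L\beta + (N-L)\beta^2$ if $L \le N < L+S$; one checks directly that this equals $\beta^0\min\{L,\eta_0\}+\beta\max\{\eta_0-L,0\}$. For the inductive step I would write $N = t_n + \eta_n l_n + N'$ with $N' = \Psi(\mathcal E_2) < l_n$ as in the proof of Lemma~\ref{Lem:Digits}, so that $N'$ has a representation of length $< n$. The construction of $\Lambda^{n+1}_{L,S}$ in Definition~\ref{Def:LSpoints} says precisely that the point indexed $N$ is obtained from the point indexed $N'$ (which lies among the first $l_n$ points, since $N'<l_n$) by applying $\psi^{n+1}_{i,j}(x)=x+i\beta^{n+1}+j\beta^{n+2}$, where the pair $(i,j)$ is read off from how far $\eta_n l_n$ advances us through the blocks of the refinement: the contribution $t_n + \eta_n l_n$ adds exactly $\min\{L,1+\eta_n\}\beta^n + \max\{1+\eta_n-L,0\}\beta^{n+1}$ after accounting for the index shift. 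Matching the block counts of the $\psi^{n+1}_{i,j}$ against $\eta_n$ and invoking the induction hypothesis on $\xi_{L,S}^{N'}$ then yields~\eqref{eq:Nth_point}. The main obstacle here is bookkeeping: one must verify carefully that the offset $i\beta^{n}+j\beta^{n+1}$ produced by the $\eta_n$-th block of the splitting rule agrees with $\min\{L,\epsilon_n+\eta_n\}\beta^n + \max\{\epsilon_n+\eta_n-L,0\}\beta^{n+1}$, using $\epsilon_n=1$, and that the constraint $\eta_n \le L+S-2$ keeps $(i,j)$ in the admissible range of the refinement rule.

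Second, for the membership criterion I would use~\eqref{eq:Nth_point} directly. Since $\xi_{L,S}^x+\beta^k=\xi_{L,S}^y$ for some $y<t_k$ (as noted before the lemma), the elementary interval $I_x^{(k)}=[\xi_{L,S}^x,\xi_{L,S}^x+\beta^k)$ collects exactly those points whose representation agrees with that of $x$ in all positions $<k$. Concretely, writing $N$ in the form~\eqref{Rep:N} and splitting the sum in~\eqref{eq:Nth_point} at index $k$, the tail $\sum_{i \ge k}$ contributes a value that is an integer multiple of $\beta^k$ plus a remainder strictly below $\beta^k$ coming only from the low-order terms; so $\xi_{L,S}^N$ lies in $I_x^{(k)}$ if and only if the low-order part $\sum_{i=0}^{k-1}(\epsilon_i t_i+\eta_i l_i)$ equals $x$, which is the truncated representation. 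I would phrase this by showing $\xi_{L,S}^N - \xi_{L,S}^x$ is a nonnegative integer multiple of $\beta^k$ precisely when the truncations agree, using irrationality of $\beta^k$ from Lemma~\ref{lem:irrational} to rule out accidental coincidences of the fractional parts.

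Third, for the counting function $A_x^{(k)}(N)$ I would argue that, by the membership criterion just established, the indices $m \le N$ with $\xi_{L,S}^m \in I_x^{(k)}$ are exactly those $m$ whose truncated representation equals $x$, i.e.\ $m = x + \sum_{i=k}^{n}(\epsilon'_i t_i + \eta'_i l_i)$ for some admissible higher digits. Via the bijection $\Psi$ of Lemma~\ref{Lem:Digits}, these are in order-preserving correspondence with the integers $m^* = \sum_{i=0}^{n-k}(\epsilon'_{i+k} t_i + \eta'_{i+k} l_i)$ obtained by shifting the digit string down by $k$ places; this correspondence is monotone because $\Psi$ respects the natural ordering of digit strings. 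Hence counting the admissible $m \le N$ reduces to counting the shifted integers $m^* \le \sum_{i=0}^{n-k}(\epsilon_{i+k} t_i + \eta_{i+k} l_i)$, and since $\Psi$ is a bijection onto $\{0,1,2,\dots\}$ the count is exactly that top value plus one (the $+1$ accounting for $m^*=0$). The delicate point is justifying that the shift map preserves order and admissibility of digit strings; this follows from Lemma~\ref{Lem:Digits} together with the observation that truncating and shifting the representation sends valid digit strings to valid digit strings, so the two counting problems are genuinely identical.
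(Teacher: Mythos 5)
Your overall architecture coincides with the paper's own proof: part one by peeling off the top digit $(\epsilon_n,\eta_n)$ via Definition~\ref{Def:LSpoints} and inducting, and part three by the shift correspondence on digit strings coming from Lemma~\ref{Lem:Digits} (where the precise role of the hypothesis $x<l_k$ is that \emph{any} admissible high-order string may be appended to the string of $x$, so that $N_2$ really runs through all nonnegative integers; the paper points out after the lemma that this fails for $l_k\le x<t_k$). The genuine problem is in your part two. The statement you propose to prove there --- that $\xi_{L,S}^N-\xi_{L,S}^x$ is a nonnegative integer multiple of $\beta^k$ precisely when the truncations agree --- is false. Splitting \eqref{eq:Nth_point} at index $k$ gives the identity the paper actually uses, namely $\xi_{L,S}^N=\xi_{L,S}^{N_1}+\beta^k\xi_{L,S}^{N_2}$, where $N_1$ is the truncation and $N_2$ the shifted tail; when $N_1=x$ the difference is $\beta^k\xi_{L,S}^{N_2}$, an integer combination of higher powers of $\beta$ lying in $[0,\beta^k)$, and it is an integer multiple of $\beta^k$ only in the trivial case $N=x$. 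Your description of the split is also inverted: the low-order digits ($i<k$) of $N$ produce the coarse location $\xi_{L,S}^{N_1}$ of the point, while the tail ($i\ge k$) produces the fine offset below $\beta^k$ --- not the other way around.

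The appeal to Lemma~\ref{lem:irrational} is likewise misplaced: that lemma assumes $L>S-1\ge 0$, i.e.\ $S\ge 1$, whereas Lemma~\ref{Lem:Carbone} must also cover $S=0$ (the van der Corput case, where $\beta=1/b$ is rational), so irrationality is unavailable --- and it is also not what is needed. The ``only if'' direction rests instead on a geometric fact about the partition: the points $\xi_{L,S}^{x'}$ with $x'<l_k$ are left endpoints of pairwise disjoint long intervals of $\rho_{L,S}^k\omega$ and hence differ by at least $\beta^k$, so the elementary intervals $I_{x'}^{(k)}$, $x'<l_k$, are pairwise disjoint; since $\xi_{L,S}^N\in I_{N_1}^{(k)}$ whenever $N_1<l_k$ (by the splitting identity and $\xi_{L,S}^{N_2}\in[0,1)$), membership in $I_x^{(k)}$ forces $N_1=x$. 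For completeness one also treats the case $l_k\le N_1<t_k$: then the digit constraints give $\epsilon_k=\eta_k=0$, so the offset $\beta^k\xi_{L,S}^{N_2}$ is in fact smaller than $\beta^{k+1}$ and the point stays inside the short interval with left endpoint $\xi_{L,S}^{N_1}$, which is again disjoint from $I_x^{(k)}$. With this replacement for your ``integer multiple plus irrationality'' step, your plan becomes the paper's proof.
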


\begin{proof}
We start with the proof of \eqref{eq:Nth_point}. Due to Definition~\ref{Def:LSpoints} we have
\[\xi_{L,S}^N=\beta^n\min\{L,\epsilon_n+\eta_n\}+\beta^{n+1}\max\{\epsilon_n+\eta_n-L,0\}+\xi_{L,S}^{\tilde N}\]
with
\[\tilde N=\sum_{i=0}^{n-1} (\epsilon_it_i+\eta_il_i).\]
Repeating this argument inductively we will end up in \eqref{eq:Nth_point}.

Now let $N$ be of the form \eqref{Rep:N} and let
\[N_1 = \sum_{i=0}^{k-1} (\epsilon_it_i+\eta_il_i)\quad \text{and}
\quad N_2=\sum_{i=0}^{n-k} (\epsilon_{i+k}t_i+\eta_{i+k}l_i).\]
Then by \eqref{eq:Nth_point} we have $\xi_{L,S}^N=\xi_{L,S}^{N_1}+\beta^k\xi_{L,S}^{N_2}$. Since the LS-sequences only take values in the interval $[0,1)$ and since two distinct points of an LS-sequence with index $<l_k$ differ at least by $\beta^k$ this implies the second statement of the lemma. Moreover, by assumption $N_1 = x < l_k$ and therefore $N_2$ can take all integer values, since we have no restriction on the digits $\epsilon_k$ and $\eta_k$ (see Lemma~\ref{Lem:Digits}). Thus, $N_2$ counts all points $\xi_{L,S}^i$ in the interval $I_x^{(k)}$ with $x < i \leq N$. Since $\xi_{L,S}^x \in I_x^{(k)}$ is the first point that hits the interval $I_x^{(k)}$, the last statement of the lemma is established.
\end{proof}

Note that Lemma \ref{Lem:Carbone} would not hold in case of $l_k\leq x<t_k$, since this would imply $\epsilon_k=0$
(see Lemma~\ref{Lem:Digits}) and we would have serious restrictions for the digits of $N_2$. Thus $N_2$ could not take all integer values. 

Since a crucial point of the proof of Theorem~\ref{th1} is to have a precise knowledge of $A_x^{(k)}(N)$ the next lemma gives a further method to describe this quantity.

\begin{lemma}\label{Lem:ABcount}
Assume that $x<l_k$. If $\xi_{L,S}^N \in I_x^{(k)}$, then there exist integers $A,B$ such that $N=x+At_k+Bl_k$ and 
$A_x^{(k)}(N)=1+A+B$.
\end{lemma}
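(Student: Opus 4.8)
The plan is to reduce the statement to an explicit ``shift formula'' expressing $t_{j+k}$ and $l_{j+k}$ as integer combinations of $t_k$ and $l_k$, and then to feed this into the counting identity already furnished by Lemma~\ref{Lem:Carbone}.

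First I would invoke Lemma~\ref{Lem:Carbone}. Since $\xi_{L,S}^N \in I_x^{(k)}$ and $x < l_k$, the truncated representation of $N$ equals $x$, so $N = x + \sum_{i=k}^n (\epsilon_i t_i + \eta_i l_i)$, and the last assertion of that lemma gives $A_x^{(k)}(N) = 1 + \sum_{i=0}^{n-k}(\epsilon_{i+k} t_i + \eta_{i+k} l_i)$. Writing $N_2$ for the sum in this latter expression, it then remains to produce integers $A,B$ with $N - x = A t_k + B l_k$ and $A + B = N_2$; the desired formula $A_x^{(k)}(N)=1+A+B$ follows at once.

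The key step is the following claim, to be proved by induction on $j \geq 0$: there exist non-negative integers $p_j, q_j, r_j, w_j$ such that $t_{j+k} = p_j t_k + q_j l_k$ with $p_j + q_j = t_j$, and $l_{j+k} = r_j t_k + w_j l_k$ with $r_j + w_j = l_j$. The cases $j=0$ and $j=1$ are immediate, the latter from Lemma~\ref{Lem:MixedRec}, which gives $t_{k+1} = t_k + (L+S-1) l_k$ and $l_{k+1} = t_k + (L-1) l_k$. For the inductive step I would use that, for fixed $k$, the sequences $(t_{j+k})_j$ and $(l_{j+k})_j$ obey the binary recurrence $u_{j+1} = L u_j + S u_{j-1}$ in the variable $j$; inserting the induction hypothesis for $t_{j+k}$ and $t_{(j-1)+k}$ shows that the coefficients satisfy $p_{j+1}=Lp_j+Sp_{j-1}$ and $q_{j+1}=Lq_j+Sq_{j-1}$, so they remain integral, and then $p_{j+1}+q_{j+1} = L t_j + S t_{j-1} = t_{j+1}$ (with the analogous computation for $r,w$) closes the induction.

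Granting the claim, I would substitute $t_{i+k} = p_i t_k + q_i l_k$ and $l_{i+k} = r_i t_k + w_i l_k$ into $N - x = \sum_{i=0}^{n-k}(\epsilon_{i+k} t_{i+k} + \eta_{i+k} l_{i+k})$ and collect the coefficients of $t_k$ and $l_k$, obtaining the integers $A = \sum_{i=0}^{n-k}(\epsilon_{i+k} p_i + \eta_{i+k} r_i)$ and $B = \sum_{i=0}^{n-k}(\epsilon_{i+k} q_i + \eta_{i+k} w_i)$. Summing and using $p_i + q_i = t_i$ together with $r_i + w_i = l_i$ yields $A + B = \sum_{i=0}^{n-k}(\epsilon_{i+k} t_i + \eta_{i+k} l_i) = N_2 = A_x^{(k)}(N) - 1$, as required. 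The only place demanding genuine care is the shift formula: one must track the two initial data vectors simultaneously so that the induction delivers integer (in fact non-negative) coefficients, and I expect this bookkeeping, rather than any conceptual difficulty, to be the main obstacle.
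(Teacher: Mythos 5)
Your proof is correct, and it reorganizes the argument around a different key lemma than the paper uses. Both proofs start identically: Lemma~\ref{Lem:Carbone} gives $N = x + \sum_{i=k}^{n}(\epsilon_i t_i + \eta_i l_i)$ and $A_x^{(k)}(N) = 1 + \sum_{i=0}^{n-k}(\epsilon_{i+k}t_i + \eta_{i+k}l_i)$, so everything reduces to writing $\sum_{i=k}^{n}(\epsilon_i t_i+\eta_i l_i) = At_k + Bl_k$ with $A+B$ equal to the shifted sum. The paper proves this by induction on $n-k$, i.e.\ on the number of digits: for $n-k\geq 2$ it folds the top term into the two below it using $t_n = Lt_{n-1}+St_{n-2}$ and $l_n = Ll_{n-1}+Sl_{n-2}$ (applied in parallel to both sums, so the digit data $\epsilon_i,\eta_i$ are carried through the induction), and it settles the case $n-k=1$ by an explicit computation with Lemma~\ref{Lem:MixedRec}. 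You instead isolate a digit-independent shift identity --- $t_{j+k}=p_jt_k+q_jl_k$, $l_{j+k}=r_jt_k+w_jl_k$ with $p_j+q_j=t_j$, $r_j+w_j=l_j$ --- proved by induction on $j$ alone (base cases from Lemma~\ref{Lem:MixedRec}, step from the binary recurrence), and then obtain $A$ and $B$ by a purely formal termwise substitution. Your decomposition is more modular: the induction never touches the digits, it produces explicit non-negative values $A=\sum_i(\epsilon_{i+k}p_i+\eta_{i+k}r_i)$ and $B=\sum_i(\epsilon_{i+k}q_i+\eta_{i+k}w_i)$, and the shift formula is reusable elsewhere. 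What the paper's folding argument buys in exchange is economy: it avoids introducing the four auxiliary coefficient sequences, at the cost of a separate hand-checked length-two base case and an induction entangled with the representation of $N$.
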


\begin{proof}
By Lemma~\ref{Lem:Carbone} we know that $N$ is of the form
\[N = x+\sum_{i=k}^n (\epsilon_i t_i+\eta_il_i).\]
Now by the recurrence relations for $t_k$ and $l_k$ and Lemma~\ref{Lem:MixedRec} we deduce that there are integers $A,B$ such that $N=x+At_k+Bl_k$.\\

Since $\xi_{L,S}^x\in I_x^{(k)}$ the proof of the lemma is complete, if we can prove that there exist integers $A,B$ such that
\[\sum_{i=k}^n (\epsilon_i t_i+\eta_il_i)=At_k+Bl_k\quad \text{and} \quad
\sum_{i=0}^{n-k} (\epsilon_{i+k}t_i+\eta_{i+k}l_i)=A+B,\]
where $\epsilon_i,\eta_i$ and $n\geq k$ are arbitrary non-negative integers.
We prove this assertion by induction on $n-k$. The case $n=k$ is trivial. We postpone checking the case $n=k+1$ to the end of the proof. Now, let us assume that $n-k\geq 2$. Using the recurrence relations \eqref{Rec:tk} and \eqref{Rec:lk} for $t_k$ and $l_k$ respectively we have 
\[\sum_{i=k}^n (\epsilon_i t_i+\eta_il_i)=\sum_{i=k}^{n-1} (\tilde\epsilon_i t_i+\tilde\eta_il_i)\]
and
\[\sum_{i=0}^{n-k} (\epsilon_{i+k} t_i+\eta_{i+k}l_i)=\sum_{i=0}^{n-k-1} (\tilde\epsilon_{i+k} t_i+\tilde\eta_{i+k}l_i),\]
with
\begin{gather*}
\tilde\epsilon_i=\epsilon_i, \tilde\eta_i=\eta_i \quad \text{for} \quad i=k,k+1,\ldots,n-3 \quad \text{and}\\
\tilde\epsilon_{n-2}=\epsilon_{n-2}+S\epsilon_n,\quad \tilde\epsilon_{n-1}=\epsilon_{n-1}+L\epsilon_n,\\
\tilde\eta_{n-2}=\eta_{n-2}+S\eta_n,\quad \tilde\eta_{n-1}=\eta_{n-1}+L\eta_n.
\end{gather*}
The new representations have fewer summands and by induction hypotheses we find appropriate integers $A$ and $B$.

It remains to check the case $n-k=1$. Using Lemma~\ref{Lem:MixedRec} we get
\begin{align*}
\epsilon_kt_k+&\eta_kl_k+\epsilon_{k+1}t_{k+1}+\eta_{k+1}l_{k+1}\\
=&\epsilon_kt_k+\eta_kl_k+\epsilon_{k+1}(t_k+l_k(L+S-1))+\eta_{k+1}(t_k+l_k(L-1))\\
=&t_k\stackrel{:=A}{\overbrace{(\epsilon_k+\epsilon_{k+1}+\eta_{k+1})}}+
l_k\stackrel{:=B}{\overbrace{(\eta_k+\epsilon_{k+1}(t_1-1)+\eta_{k+1}(l_1-1))}}.
\end{align*}
But, now
\begin{multline*}
A+B=\epsilon_k+\epsilon_{k+1}+\eta_{k+1}+\eta_k+\epsilon_{k+1}(t_1-1)+\eta_{k+1}(l_1-1))\\
=\epsilon_kt_0+\eta_kl_0+\epsilon_{k+1}t_1+\eta_{k+1}l_1
\end{multline*}
and we have found appropriate integers $A$ and $B$.
\end{proof}

The next lemma is related to the discrepancy of one-dimensional LS-sequences. In particular we are interested in an accurate formula for $\frac{A_x^{(k)}(N)}N$, where $\xi_{L,S}^N \in I_x^{(k)}$.

\begin{lemma}\label{Lem:Discrepancy-1-dim}
Assume that $N$ has a representation of the form \eqref{Rep:N}, and assume that $\xi_{L,S}^N \in I_x^{(k)}$. Then we have
\[\frac{A_x^{(k)}(N)}N =\beta^k+\frac{R(1-(-S\beta)^k)+1-x\beta^k}N,\]
where
\[R=\sum_{i=k}^{n}(\epsilon_i \tau_1+\eta_i \lambda_1)(-S\beta)^{i-k},\]
which can be estimated by
\[|R|<\max\{|\tau_1|,|\tau_1+(L+S-2)\lambda_1|\}\frac{1-(S\beta)^{n-k+1}}{1-S\beta}.\]
if $S\beta\neq 1$ and 
\[|R|<\max\{|\tau_1|,|\tau_1+(L+S-2)\lambda_1|\}\max\{n-k+1,0\}\]
if $S\beta=1$.
\end{lemma}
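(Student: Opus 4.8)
The plan is to reduce the claim to the two exact expressions supplied by Lemma~\ref{Lem:Carbone} and then to insert the Binet-type formulas \eqref{Rec:tk} and \eqref{Rec:lk}. Since $\xi_{L,S}^{N}\in I_x^{(k)}$, Lemma~\ref{Lem:Carbone} gives $N=x+\sum_{i=k}^{n}(\epsilon_i t_i+\eta_i l_i)$ and $A_x^{(k)}(N)=1+\sum_{i=0}^{n-k}(\epsilon_{i+k}t_i+\eta_{i+k}l_i)$. Substituting $t_i=\tau_0\beta^{-i}+\tau_1(-S\beta)^{i}$ and $l_i=\lambda_0\beta^{-i}+\lambda_1(-S\beta)^{i}$ splits each sum into a dominant part carried by the root $\beta^{-1}$ and a subdominant part carried by the conjugate root $-S\beta$; after the reindexing $i\mapsto i-k$ the subdominant part of $A_x^{(k)}(N)-1$ is exactly the quantity $R$.

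The heart of the argument is to cancel the dominant part, which becomes \emph{identical} in both expressions once a factor $\beta^{k}$ is supplied. Writing $T:=\sum_{i=k}^{n}(\epsilon_i\tau_0+\eta_i\lambda_0)\beta^{-(i-k)}$, one has $A_x^{(k)}(N)-1=T+R$. Multiplying $N-x$ by $\beta^{k}$ and using the single algebraic identity $\beta^{k}(-S\beta)^{i}=(-S\beta^{2})^{k}(-S\beta)^{i-k}$ turns its dominant part into the same $T$ and its subdominant part into $(-S\beta^{2})^{k}R$, so that $\beta^{k}(N-x)=T+(-S\beta^{2})^{k}R$. Subtracting the two relations eliminates $T$ and gives $A_x^{(k)}(N)-1-\beta^{k}(N-x)=R\bigl(1-(-S\beta^{2})^{k}\bigr)$; solving for $A_x^{(k)}(N)$ and dividing by $N$ yields $\frac{A_x^{(k)}(N)}{N}=\beta^{k}+\frac{R\,(1-(-S\beta^{2})^{k})+1-x\beta^{k}}{N}$, where the factor multiplying $R$ is $1-\beta^{k}(-S\beta)^{k}$.

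For the estimate on $R$ I would proceed term by term. The coefficient $\epsilon_i\tau_1+\eta_i\lambda_1$ vanishes whenever $\epsilon_i=0$ (because then $\eta_i=0$) and otherwise equals $\tau_1+\eta_i\lambda_1$ with $0\le\eta_i\le L+S-2$; being affine in $\eta_i$, its modulus is maximised at an endpoint, hence bounded by $M:=\max\{|\tau_1|,|\tau_1+(L+S-2)\lambda_1|\}$. The triangle inequality then gives $|R|\le M\sum_{i=k}^{n}(S\beta)^{i-k}=M\sum_{j=0}^{n-k}(S\beta)^{j}$, and summing the geometric series produces $M\frac{1-(S\beta)^{n-k+1}}{1-S\beta}$ when $S\beta\ne1$ and $M\,(n-k+1)$ when $S\beta=1$, which is the asserted bound; the inequality is strict (at least for $n>k$) because the signs of $(-S\beta)^{i-k}$ alternate and therefore forbid equality in the triangle inequality.

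The substitution and the geometric summation are routine. The step requiring care, and the one I expect to be the main obstacle, is the clean cancellation of the dominant sum $T$: it works only because the reindexing hidden in $A_x^{(k)}(N)$ matches precisely the factor $\beta^{k}(-S\beta)^{k}=(-S\beta^{2})^{k}$ created when $N$ is scaled by $\beta^{k}$, and keeping the two spectral modes $\beta^{-1}$ and $-S\beta$ separated throughout the bookkeeping is where an error is most likely to creep in.
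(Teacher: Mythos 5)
Your proof is correct and follows essentially the same route as the paper's: exact counts from Lemma~\ref{Lem:Carbone}, substitution of the Binet-type formulas \eqref{Rec:tk} and \eqref{Rec:lk} to split each sum into a dominant part $T$ and a subdominant part $R$, cancellation of $T$ via the relation $N-x=\beta^{-k}T+(-S\beta)^kR$, and then the same termwise bound (affine in $\eta_i$, maximised at an endpoint) followed by a geometric series. Note that your factor $1-(-S\beta^{2})^{k}$ differs from the $1-(-S\beta)^{k}$ appearing in the lemma as stated, but it agrees with the paper's own derivation and with the way the lemma is later used in the proof of Theorem~\ref{th1}, so the statement contains a typo and your version is the correct one.
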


\begin{proof}
Using our assumptions and Lemma~\ref{Lem:Carbone} we can calculate the exact values of $A_x^{(k)}(N)$ and $N$. In fact, we have
\[N=x + \sum_{i=0}^{n}(\epsilon_i t_{i}+\eta_i l_{i})\quad \text{and}
\quad A_x^{(k)}(N)=\sum_{i=k}^{n}(\epsilon_i t_{i-k}+\eta_i l_{i-k}) +1.\]
This yields
\begin{align*}
\frac{A_x^{(k)}(N)}N =&\frac{\sum_{i=k}^{n}(\epsilon_i t_{i-k}+\eta_i l_{i-k}) +1}{\sum_{i=k}^{n}(\epsilon_i t_{i}+\eta_i l_{i}) +x}\\
=& \frac{\sum_{i=k}^{n}(\epsilon_i \tau_0+\eta_i \lambda_0)\beta^{-i+k}+\overset{R}{\overbrace{\sum_{i=k}^{n}(\epsilon_i \tau_1+\eta_i \lambda_1)(-S\beta)^{i-k}}}+1}
{\beta^{-k}\sum_{i=k}^{n}(\epsilon_i \tau_0+\eta_i \lambda_0)\beta^{-i+k}+(-S\beta)^k\underset{R}{\underbrace{\sum_{i=k}^{n}(\epsilon_i \tau_1+\eta_i \lambda_1)(-S\beta)^{i-k}}}+x}\\
=&\beta^k+\frac{R+1}N+\frac{\sum_{i=k}^{n}(\epsilon_i \tau_0+\eta_i \lambda_0)\beta^{-i+k}-N\beta^k}N\\
=&\beta^k+\frac{R+1}N+\frac{\sum_{i=k}^{n}(\epsilon_i \tau_0+\eta_i \lambda_0)\beta^{-i+k}}N\\
&-\frac{\left(\beta^{-k}\sum_{i=k}^{n}(\epsilon_i \tau_0+\eta_i \lambda_0)\beta^{-i+k}+(-S\beta)^kR+x\right)\beta^k}N\\
=&\beta^k+\frac{R(1-(-S\beta^2)^k)+1-x\beta^k}N.
\end{align*}
Thus it remains to estimate $R$. Note that $\tau_1<0$ and
\begin{equation*}
 \left| \epsilon_i \tau_i + \eta_i \lambda_i \right| < \max\{|\tau_1|,|\tau_1+(L+S-2)\lambda_1|\}
\end{equation*}
for $i = k, \ldots, n$. Hence to complete the proof of Lemma~\ref{Lem:Discrepancy-1-dim} we only have to compute the geometric sum 
\[\sum_{i=0}^{n-k}|-S\beta|^{i}.\]
We have to take absolute values in order to estimate $R$.
\end{proof}

Note that Lemma~\ref{Lem:Discrepancy-1-dim} gives a constant bound for $|R|$ for $n \rightarrow \infty$ if and only if $S \beta < 1$, which is equivalent to $L > S - 1$. This is exactly the case when we have a one-dimensional low-discrepancy LS-sequence.

\section{Proofs of main results}\label{Sec:ProofTh1}

We start with the proof of Theorem~\ref{th1}. According to Definition~\ref{def6}, we define the recurrences
\begin{align*}
 t_n^{(i)} &= L_i t_{n - 1}^{(i)} + S_i t_{n - 2}^{(i)},& t_0^{(i)}& =1, & t_1^{(i)}& =L_i+S_i,\\
 l_n^{(i)} &= L_i l_{n - 1}^{(i)} + S_i l_{n - 2}^{(i)},& l_0^{(i)}& =1, & l_1^{(i)}& =L_i,\\
 s_n^{(i)} &= L_i s_{n - 1}^{(i)} + S_i s_{n - 2}^{(i)},& s_0^{(i)}& =0, & s_1^{(i)}& =S_i,
\end{align*}
which correspond to the number of intervals, long intervals and short intervals after $n$ refinement steps in the $i$-th component of the two-dimensional LS-sequence, for $i=1,2$.\\

Now let $k$ and $m$ be integers satisfying the assumptions of Theorem~\ref{th1} and assume that the integers $\tilde{k}$ and $\tilde{m}$ are ``large'' (a precise condition will be given later). Furthermore choose two integers $x_1<l^{(1)}_k$ and $x_2<l^{(2)}_m$ such that $x_1\neq x_2$. In the sequel we will consider the intervals $I=I_{x_1}^{(k)} \times I_{x_2}^{(m)}$ and $\tilde{I} = I_{x_1}^{(k+\tilde{k})} \times I_{x_2}^{(m+\tilde{m})}$. We want to prove that no point of the two-dimensional LS-sequence is contained in the interval $\tilde{I}$. Note that $\tilde{I} \subset I$, and consequently a point can only be contained in $\tilde{I}$ if it is also contained in $I$. Now let $N$ be given, and assume that $\vect{\xi}_\mathcal{B}^N \in I$. We will also assume that $\vect{\xi}_\mathcal{B}^N \in \tilde{I}$, and show that this leads to a contraction (provided $\tilde k$ and $\tilde m$ are sufficiently large) Consequently, no point of the sequence $(\vect{\xi}_\mathcal{B}^n)_{n \in \mathbb{N}}$ can be contained in $\tilde{I}$.\\

Due to Lemma~\ref{Lem:ABcount}, $\vect{\xi}_{L_1,S_1}^N \in I_{x_1}^{(k)}$ implies that there exist integers $A_1,B_1$ such that $N=x_1+A_1t^{(1)}_k+B_1l^{(1)}_k$. Similarly, $\vect{\xi}_{L_2,S_2}^N \in I_{x_2}^{(m)}$ implies the existence of integers $A_2,B_2$ such that $N=x_2+A_2t^{(2)}_m+B_2l^{(2)}_m$. Thus
\begin{equation}\label{eq:N}
x_1+A_1t^{(1)}_k+B_1l^{(1)}_k=x_2+A_2t^{(2)}_m+B_2l^{(2)}_m.
\end{equation}
Moreover, choosing $A_1$ and $B_1$ according to Lemma~\ref{Lem:ABcount} we know that there are exactly $1+A_1+B_1$ points with index $\leq N$ lying in the interval $I_{x_1}^{(k)}$. Therefore Lemma~\ref{Lem:Discrepancy-1-dim} yields
\[\frac{A_1+B_1+1}{x_1+A_1t^{(1)}_k+B_1l^{(1)}_k}=\beta_1^k+\frac{R_1(1-(-S_1\beta_1^2)^k)+1-x_1\beta_1^k}{N}.\]
Multiplying both sides with $N=x_1+A_1t^{(1)}_k+B_1l^{(1)}_k$ and solving for $B_1$ we obtain
\begin{equation}\label{eq:B1}
B_1=A_1\frac{t^{(1)}_k\beta_1^k-1}{1-l^{(1)}_k\beta_1^k}+\frac{R_1(1-(-S_1\beta_1^2)^k)}{1-l^{(1)}_k\beta_1^k}.
\end{equation}
A similar argument for the second component yields
\begin{equation}\label{eq:B2}
B_2=A_2\frac{t^{(2)}_m\beta_2^m-1}{1-l^{(2)}_m\beta_2^m}+\frac{R_2(1-(-S_2\beta_2^2)^m)}{1-l^{(2)}_k\beta_2^m}.
\end{equation}
Now we resubstitute equations \eqref{eq:B1} and \eqref{eq:B2} into \eqref{eq:N} and obtain
\begin{multline}\label{eq:A1A2}
x_1+A_1\frac{t^{(1)}_k-l^{(1)}_k}{1-l^{(1)}_k\beta_1^k}+\frac{R_1l^{(1)}_k(1-(-S_1\beta_1^2)^k)}{1-l^{(1)}_k\beta_1^k}=
\\
x_2+A_2\frac{t^{(2)}_m-l^{(2)}_m}{1-l^{(2)}_m\beta_2^m}+\frac{R_2l^{(2)}_m(1-(-S_2\beta_2^2)^m)}{1-l^{(2)}_m\beta_2^m}. 
\end{multline}

Now let us investigate the quantities $\frac{t_k-l_k}{1-l_k\beta^k}$ and
$\frac{l_k(1-(-S\beta^2)^k)}{1-l_k\beta^k}$.

\begin{lemma}\label{Lem:Aux}
We have
\[\frac{t_k-l_k}{1-l_k\beta^k}=\beta^{-k-1}\]
and
\[\frac{l_k(1-(-S\beta^2)^k)}{1-l_k\beta^k}=\frac{\lambda_0}{\lambda_1}\beta^{-k}+(-S\beta)^k.\]
\end{lemma}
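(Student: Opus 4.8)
The plan is to substitute the closed-form expressions \eqref{Rec:tk} and \eqref{Rec:lk} for $t_k$ and $l_k$ into both quotients and then reduce everything to elementary manipulations of the constants $\tau_0,\tau_1,\lambda_0,\lambda_1$ together with the defining relation $L\beta+S\beta^2=1$. Two workhorse identities should drive the entire computation. First, directly from the formulas one reads off $\lambda_0+\lambda_1=1$, and since $(-S\beta)^k\beta^k=(-S\beta^2)^k$ this gives the denominator in factored form,
\[1-l_k\beta^k=1-\lambda_0-\lambda_1(-S\beta^2)^k=\lambda_1\bigl(1-(-S\beta^2)^k\bigr).\]
Second, rationalizing shows $\beta^{-1}=\tfrac{L+\sqrt{L^2+4S}}{2}$, and a direct subtraction of the constants yields $\tau_0-\lambda_0=\tfrac{S}{\sqrt{L^2+4S}}=-(\tau_1-\lambda_1)$; combining these two facts gives the clean relation $(\tau_0-\lambda_0)/\lambda_1=\beta^{-1}$, which will be the key to collapsing the messy radicals.

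For the second identity I would proceed first, since it is the easier one: with the factored denominator above, the common factor $1-(-S\beta^2)^k$ cancels outright, leaving
\[\frac{l_k\bigl(1-(-S\beta^2)^k\bigr)}{1-l_k\beta^k}=\frac{l_k}{\lambda_1}=\frac{\lambda_0\beta^{-k}+\lambda_1(-S\beta)^k}{\lambda_1}=\frac{\lambda_0}{\lambda_1}\beta^{-k}+(-S\beta)^k,\]
which is exactly the claimed expression. For the first identity I would write $t_k-l_k=(\tau_0-\lambda_0)\beta^{-k}+(\tau_1-\lambda_1)(-S\beta)^k=\tfrac{S}{\sqrt{L^2+4S}}\bigl(\beta^{-k}-(-S\beta)^k\bigr)$, then use the telescoping identity $\beta^{-k}-(-S\beta)^k=\beta^{-k}\bigl(1-(-S\beta^2)^k\bigr)$ so that, after dividing by the factored denominator, the factor $1-(-S\beta^2)^k$ again cancels and one is left with $\tfrac{(\tau_0-\lambda_0)}{\lambda_1}\beta^{-k}=\beta^{-1}\cdot\beta^{-k}=\beta^{-k-1}$.

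The computations here are essentially routine, so there is no deep obstacle; the only genuine subtleties are bookkeeping ones. The first is correctly tracking the exponent collapse $(-S\beta)^k\beta^k=(-S\beta^2)^k$, which is what makes both denominator and telescoping identities line up. The second, which I would flag explicitly, is the legitimacy of cancelling $1-(-S\beta^2)^k$: this requires $1-(-S\beta^2)^k\neq 0$, which holds for $k\geq 1$ because $\beta$ is irrational by Lemma~\ref{lem:irrational} (so $S\beta^2\neq\pm1$), and which also matches the standing low-discrepancy hypothesis $L>S-1$ guaranteeing $S\beta^2<1$. I would therefore state the identities for $k\geq 1$, where both sides are well defined.
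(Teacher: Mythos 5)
Your proposal is correct and follows essentially the same route as the paper's proof: substitute the closed forms \eqref{Rec:tk} and \eqref{Rec:lk}, factor the denominator as $\lambda_1\bigl(1-(-S\beta^2)^k\bigr)$ via $\lambda_0+\lambda_1=1$, cancel the common factor $1-(-S\beta^2)^k$, and finish with the relation $(\tau_0-\lambda_0)/\lambda_1=\beta^{-1}$ (the paper states this same fact in the equivalent form $\lambda_1\big/\tfrac{S}{\sqrt{L^2+4S}}=\beta$). Your extra remark justifying the cancellation (nonvanishing of $1-(-S\beta^2)^k$ for $k\geq 1$) is a small point the paper leaves implicit, but it does not change the argument.
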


\begin{proof}
Using the explicit formulas \eqref{Rec:tk} and \eqref{Rec:lk} for the recurrences $t_k$ and $l_k$, respectively, we obtain
\begin{align*}
\frac{t_k-l_k}{1-l_k\beta^k}=&
\frac{(\tau_0-\lambda_0)\beta^{-k}+(\tau_1-\lambda_1)(-S\beta)^k}{1-\beta^k(\lambda_0\beta^{-k}+\lambda_1(-S\beta)^k)}
=\frac{\frac{S}{\sqrt{L^2+4S}}(\beta^{-k}-(-S\beta)^k)}{\lambda_1(1-(-S\beta^2)^k)}\\
=&\frac{\frac{S}{\sqrt{L^2+4S}}}{\lambda_1}\cdot\beta^{-k}\cdot\frac{1-(-S\beta^2)^k}{1-(-S\beta^2)^k}=\beta^{-k-1},
\end{align*}
which proves the first part of Lemma~\ref{Lem:Aux}.\\

Note that 
\[\frac{\lambda_1}{\frac{S}{\sqrt{L^2+4S}}}=\frac{\frac{-L+\sqrt{L^2+4S}}{2\sqrt{L^2+4S}}}{\frac{S}{\sqrt{L^2+4S}}}= \frac{-L+\sqrt{L^2+4S}}{2S}=\beta.\]

Let us now prove the second statement of the lemma. Again we use the explicit formulas \eqref{Rec:tk} and \eqref{Rec:lk} and obtain
\begin{align*}
\frac{l_k(1-(-S\beta^2)^k)}{1-l_k\beta^k}=&
\frac{l_k(1-(-S\beta^2)^k)}{1-\beta^k(\lambda_0\beta^{-k}+\lambda_1(-S\beta)^k)}\\
=&\frac{l_k(1-(-S\beta^2)^k)}{\lambda_1(1-(-S\beta^2)^k)}\\
=&\frac{\lambda_0}{\lambda_1}\beta^{-k}+(-S\beta)^k
\end{align*}
\end{proof}

Continuing the proof of Theorem~\ref{th1}, let us insert the explicit formulas of Lemma~\ref{Lem:Aux} into \eqref{eq:A1A2}. We get
\begin{equation}\label{eq:A1}
A_1=A_2\frac{\beta_1^{k+1}}{\beta_2^{m+1}}+(x_2-x_1)\beta_1^{k+1}+\tilde R,
\end{equation}
where
\begin{equation}\label{eq:R-Fehler}
\tilde R=R_2\stackrel{c_2:=}{\overbrace{\beta_2\left(\frac{\lambda_0^{(2)}}{\lambda_1^{(2)}}+(-S_2\beta_2^2)^m \right)\frac{\beta_1^{k+1}}{\beta_2^{m+1}}}}-
R_1\stackrel{c_1:=}{\overbrace{\beta_1\left(\frac{\lambda_0^{(1)}}{\lambda_1^{(1)}}+(-S_1\beta_1^2)^k)\right)}}.
\end{equation}

Now by assumption we have $\frac{\beta_1^{k+1}}{\beta_2^{m+1}}=\frac pq$ for some coprime positive integers $p,q$ and
\eqref{eq:A1} can be written as
\begin{equation}\label{eq:Integers}
2(qA_1-pA_2)=2(x_2-x_1)q\beta^{k+1}+2q\tilde R,
\end{equation}
Obviously the left side of \eqref{eq:Integers} is an even integer. In order to prove Theorem~\ref{th1} we want to show that the right side is not an even integer if $\vect{\xi}_\mathcal{B}^N \in\tilde I$ and $\tilde k$ and $\tilde m$ are sufficiently large. By our assumptions on $x_1,x_2$ and the parameters $L_1,S_1,L_2$ and $S_2$ we have
\begin{equation}\label{x1x2}
\|2q(x_2-x_1)\beta_1^{k+1}\|_{\text{odd}}=\epsilon<1,
\end{equation}
where $\| \cdot \|_{\text{odd}}$ denotes the distance to the nearest odd integer. Note that $\beta_1^{k+1}$ is irrational due to Lemma~\ref{lem:irrational} and therefore indeed $\epsilon<1$.

Now Lemma~\ref{Lem:Discrepancy-1-dim} tells us that the assumption $\xi_{\mathcal B}^N\in \tilde I$ yields
\[N=x+\sum_{i=k+\tilde k}^{n}(\epsilon^{(1)}_i t^{(1)}_i+\eta^{(1)}_i l^{(1)}_i)(-S_1\beta_1)^{i},\]
and we have
\[|R_1|\leq \max\{|\tau^{(1)}_1|,|\tau^{(1)}_1+(L_1+S_1-2)\lambda^{(1)}_1|\}\frac{\left|- S_1 \beta_1 \right|^{\tilde k}}{1-S_1\beta_1}.\]
A similar inequality also holds for $|R_2|$. Now we choose $\tilde k$ sufficiently large such that 
\[\left| -S_1 \beta_1\right|^{\tilde k}<\frac{1-\epsilon}{|c_1|4q \max\{|\tau^{(1)}_1|,|\tau^{(1)}_1+(L_1+S_1-2)\lambda^{(1)}_1|\}}(1-S_1\beta_1)\]
in case of $|c_1|\neq 0$ and $\tilde k=0$ otherwise. Similarly we choose $\tilde m$ sufficiently large such that
\[\left| -S_2 \beta_2\right|^{\tilde m}<\frac{1-\epsilon}{|c_2|4q \max\{|\tau^{(2)}_1|,|\tau^{(2)}_1+(L_2+S_2-2)\lambda^{(2)}_1|\}}(1-S_2\beta_2)\]
in case of $|c_2|\neq 0$ and $\tilde m=0$ otherwise.
With this choice of $\tilde k$ and $\tilde m$ we obtain
$$
|\tilde R|<\frac {1-\epsilon}{2q},
$$
which together with \eqref{x1x2} implies that the right side of \eqref{eq:Integers} is an odd integer plus something $<1$ in absolute values. Consequently \eqref{eq:A1} does not have any solution $A_1, A_2$. However, since \eqref{eq:A1} followed from our assumptions, this means that we have a contradiction. Consequently it is not possible that $\vect{\xi}_\mathcal{B}^N \in \tilde{I}$ for any $N$, which means that $\tilde{I}$ does not contain any point of $(\vect{\xi}_\mathcal{B}^n)_{n \in \mathbb{N}}$. This completes the proof of Theorem~\ref{th1}.\\

We continue with the proof of Theorem~\ref{th2}. Let $b = \gcd(L_1, S_1, L_2, S_2) > 1$ and let $t_n^{(1)}, l_n^{(1)}, t_n^{(2)}, l_n^{(2)}$ be defined as in the proof of Theorem~\ref{th1}. Note that $b$ divides $t_n^{(1)}, l_n^{(1)}, t_n^{(2)}, l_n^{(2)}$ for all $n \geq 1$. Now consider the interval $I = [0, \beta_1) \times [\beta_2, 2 \beta_2)$. Note that $2 \beta_2 < 1$ since by $\gcd(L_1, S_1, L_2, S_2) > 1$ it follows that $L_2 \geq 2$. It follows from Lemma~\ref{Lem:ABcount} that we can write every $N_1$ for which $\xi_{L_1, S_1}^{N_1} \in [0, \beta_1)$ as $N_1 = 0 + A_1 t_1^{(1)} + B_1 l_1^{(1)}$ and every $N_2$ for which $\xi_{L_2, S_2}^{N_2} \in [\beta_2, 2 \beta_2)$ as $N_2 = 1 + A_2 t_1^{(2)} + B_2 l_1^{(2)}$ for appropriate integers $A_1, B_1, A_2, B_2$. Hence we obtain for every $N_1, N_2$ that $N_1 \equiv 0 \bmod{b}$ and $N_2 \equiv 1 \bmod{b}$, respectively. Consequently no point of the two-dimensional LS-sequence can be contained in $I$. This proves Theorem~\ref{th2}.



\section*{Acknowledgements}
The authors thank Maria Rita Iac\`o for presenting numerical studies of non-uniformly distributed LS-sequences in her talk at TU Graz in June 2012.

\end{document}